\numberwithin{equation}{section}
\theoremstyle{plain}
\newtheorem{theorem}{Theorem}[section]
\newtheorem{proposition}[theorem]{Proposition}
\newtheorem{lemma}[theorem]{Lemma}
\newtheorem{corollary}[theorem]{Corollary}
\newtheorem{definition}[theorem]{Definition}
\newtheorem{remark}[theorem]{Remark}
\newenvironment{proof}{{\noindent \textbf{Proof}\,\,}}{\hspace*{\fill}$\Box$\medskip}
\title{On determinants of modified Bessel functions and entire solutions of double confluent Heun equations}
\author{V.M.Buchstaber\thanks{Permanent address: Steklov Mathematical Institute, 8, Gubkina street, 119991, Moscow, Russia.  Email: buchstab@mi.ras.ru}
\thanks{All-Russian Scientific Research Institute for Physical and Radio-Technical Measurements (VNIIFTRI),}
\thanks{Supported by part by RFBR grant 14-01-00506.}, 
A.A.Glutsyuk\thanks{ CNRS, France (UMR 5669 (UMPA, ENS de Lyon) and UMI 2615 (Lab. J.-V.Poncelet)), Lyon, France. 
%Permanent address:  Unit\'e de Math\'ematiques 
%Pures et Appliqu\'ees, M.R., \'Ecole Normale Sup\'erieure de Lyon,
%46 all\'ee d'Italie, 69364 Lyon 07, France.   
Email:
aglutsyu@ens-lyon.fr}, 
\thanks{National Research University Higher School of Economics (HSE), Moscow, Russia}
 \thanks{Supported by part by RFBR grants 13-01-00969-a, 16-01-00748, 16-01-00766 
 and ANR grant ANR-13-JS01-0010.}}
\begin{document}
\maketitle
\def\zz{\mathbb Z}
\def\nn{\mathbb N}
\def\var{\varepsilon}
\def\td{\mathbb T^3} 
\def\rr{\mathbb R}
\def\la{\lambda}
\def\go#1{\EuFrak#1}
\def\wt{\widetilde}
\def\sign{\operatorname{sign}}
\def\cc{\mathbb C}
\def\diag{\operatorname{diag}}
\def\dd{\Delta_{discr}}

\vspace{-1cm}
\begin{abstract} We investigate the question on existence  of entire solutions of well-known linear differential 
equations that are linearizations of nonlinear equations modeling the Josephson effect in superconductivity. 
We consider the modified Bessel functions $I_j(x)$ of the first kind, which are Laurent series coefficients of the analytic 
function family $e^{\frac x2(z+\frac 1z)}$. For every $l\geq1$ we study  the family parametrized by $k, n\in\zz^l$, 
$k_1>\dots>k_l$, $n_1>\dots>n_l$ of $(l\times l)$-matrix functions formed 
by the modified Bessel functions of the first kind $a_{ij}(x)=I_{k_j-n_i}(x)$, $i,j=1,\dots,l$. 
We show that their determinants  $f_{k,n}(x)$ are positive for every 
$l\geq1$, $k,n\in\zz^l$ as above and $x>0$. 
The above determinants are closely related to a sequence (indexed by $l$) of 
families of double confluent Heun equations, which are linear second order 
differential equations with two irregular singularities, at zero and at infinity. V.M.Buchstaber and S.I.Tertychnyi have constructed their 
holomorphic solutions on $\cc$ for an explicit class of parameter values and  conjectured that they  do not exist for other 
parameter values. 
They have reduced their conjecture to the second conjecture saying that if an appropriate 
 second similar equation has a polynomial solution, then the first one 
has no entire solution. They have proved the latter statement under the additional assumption (third conjecture) 
that  $f_{k,n}(x)\neq0$ for $k=(l,\dots,1)$, 
$n=(l-1,\dots,0)$ and every $x>0$. Our more general result implies 
all the above conjectures, together with their corollary for  
the overdamped model of the Josephson junction in superconductivity: the  description of adjacency points of phase-lock areas as 
solutions of explicit analytic equations.
\end{abstract}

\section{Introduction}

We consider the well-known problem on entire solutions of double confluent Heun equations. Our results are directed to applications 
to nonlinear equations modeling the Josephson effect in superconductivity.

\subsection{Main result}

%{\bf To do.}
%
%
%4)!!! Do we have cite Lea Jaccoud El Jaick Heun equations convergence Coulomb Using Coulomb functions to construct solutions.
%Po-moiemu, statia o drugom, i ssylatsia nie obiazatelno. 
%
%%

Let $Y(\zz^l)$ denote the space of the so-called {\it two-sided Young diagrams} of order $l$: 
$$Y(\zz^l)=\{ k=(k_1,\dots,k_l) \ | \ k_1>\dots>k_l\}\subset\zz^l.$$
This notion is motivated by the fact that the $k\in Y(\zz^l)$ with $k_i>0$ are the classical Young diagrams. 
To every two-sided infinite number sequence $(a_j)_{j=-\infty}^{+\infty}$ and every $l\geq1$ 
we associate the sequence of matrices $A_{k,n}$ numerated by pairs of two-sided Young diagrams $k$ and $n$: 
\begin{equation}A_{k,n}=(a_{ij}), \ a_{ij}=a_{k_j-n_i}; \  f_{k,n}:=\det A_{k,n}.\label{det}\end{equation}

%begin-CCC
\begin{remark} The matrices $A_{k,n}$ with $k=(k_1,k_1-1,\dots,k_1-l+1)$ and $n=(n_1,n_1-1,\dots,n_1-l+1)$ are the classical 
 Hankel matrices \cite[pp. 301--302, 495]{gant} 
 written with inverse order of columns. The theory of Hankel matrices has important 
 applications to the theory of functions, see \cite[chapter XVI]{gant}. The matrices $A_{k,n}$ corresponding to general two-sided 
 Young diagrams may be considered as a natural generalization of Hankel matrices. The results of the paper provide the context on 
 the crossing of dynamical systems, complex differential equations and physics where the latter matrices   naturally arise.  
 \end{remark}
%end-CCC

\begin{remark} For every fixed two-sided Young diagram $n$ 
the determinants $f_{k,n}$ with variable $k\in Y(\zz^l)$  form an infinite  sequence of projective 
Pl\"ucker coordinates corresponding to the subspace generated by the vector 
$a=(a_j)$ and its shifts by $n_1,\dots,n_l$ in an infinite-dimensional space. 
\end{remark}

The main result of the paper is the next theorem, which concerns the above determinants $f_{k,n}(x)$ 
constructed from the sequence of modified Bessel functions of the first kind 
$a_j=I_j(x)$. Recall that $I_j(x)$  are Laurent series coefficients for the family of analytic functions
$$g_x(z)=e^{\frac x2(z+\frac1z)}=\sum_{j=-\infty}^{+\infty}I_j(x)z^j.$$
Equivalently, they are defined by the integral formulas  
$$I_j(x)=\frac1{\pi}\int_0^{\pi}e^{x\cos\phi}\cos(j\phi) d\phi, \ j\in\zz.$$

\begin{theorem} \label{th} The determinant $f_{k,n}(x)$ in (\ref{det}) with $a_j=I_j(x)$ is positive 
for every two-sided Young diagrams $k$ and $n$ and every $x>0$.
\end{theorem}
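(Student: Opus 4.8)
The plan is to obtain positivity from three ingredients: a semigroup (convolution) identity for the determinants, nonnegativity coming from total positivity of the underlying Toeplitz structure, and a strict-positivity upgrade driven by the large-$x$ asymptotics. First I would exploit the multiplicativity $g_{x+y}(z)=g_x(z)g_y(z)$, which upon comparing Laurent coefficients gives the addition formula $I_j(x+y)=\sum_{m\in\zz}I_m(x)I_{j-m}(y)$. Writing $M(x)=(I_{q-p}(x))_{p,q\in\zz}$ for the bi-infinite Toeplitz matrix, this reads $M(x+y)=M(x)M(y)$, the product converging absolutely because $I_m(x)$ decays super-exponentially in $|m|$. Applying the Cauchy--Binet formula to the minor with rows $n$ and columns $k$ yields the semigroup identity
$$f_{k,n}(x+y)=\sum_{s\in Y(\zz^l)} f_{s,n}(x)\,f_{k,s}(y),$$
in which every factor is again one of the determinants of the theorem.

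Next I would establish nonnegativity. Factoring $g_x(z)=\lim_{N\to\infty}\big[(1+\tfrac{xz}{2N})(1+\tfrac{x}{2Nz})\big]^N$, I observe that the Toeplitz matrices of the elementary generating functions $1+\alpha z$ and $1+\beta/z$ (with $\alpha,\beta\ge0$) are bidiagonal with nonnegative entries, hence totally positive. Since total positivity is preserved under matrix products (again Cauchy--Binet) and under entrywise limits, $M(x)$ is totally positive for $x>0$, so every one of its minors, in particular $f_{k,n}(x)$, is $\ge0$. For the diagonal case I would use the integral representation $I_m(x)=\frac1{2\pi}\int_{-\pi}^\pi e^{im\phi}e^{x\cos\phi}\,d\phi$: setting $u_i(\phi)=e^{in_i\phi}$, the matrix $A_{n,n}(x)$ is exactly the Gram matrix $(\langle u_i,u_j\rangle_x)$ of the positive-definite Hermitian form $\langle f,g\rangle_x=\frac1{2\pi}\int_{-\pi}^\pi f\bar g\,e^{x\cos\phi}\,d\phi$, and since the $u_i$ are linearly independent, $f_{n,n}(x)>0$ for every $x>0$.

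Now comes the strict-positivity upgrade, which I expect to be the main obstacle. Retaining only the term $s=n$ in the semigroup identity (legitimate since all terms are $\ge0$) gives $f_{k,n}(2t)\ge f_{n,n}(t)\,f_{k,n}(t)$. Because $f_{n,n}(t)>0$, a hypothetical zero $f_{k,n}(x_0)=0$ would force $f_{k,n}(x_0/2^m)=0$ for all $m$; these zeros accumulate at $0$, and as $f_{k,n}$ is entire in $x$ the identity theorem would force $f_{k,n}\equiv0$. To exclude this I would compute the asymptotics as $x\to+\infty$. By Andreief's identity,
$$f_{k,n}(x)=\frac1{l!}\int_{[-\pi,\pi]^l}A_n(\phi)\,\overline{A_k(\phi)}\,\prod_s d\mu_x(\phi_s),\qquad A_m(\phi)=\det\big(e^{im_a\phi_b}\big),\quad d\mu_x=\tfrac1{2\pi}e^{x\cos\phi}\,d\phi.$$
As $x\to+\infty$ the weight concentrates at $\phi=0$, where the confluent expansion $A_m(\phi)\sim i^{\binom l2}\Delta(m)\Delta(\phi)/\prod_{r=0}^{l-1}r!$ (with $\Delta$ the Vandermonde) produces the leading integrand $\Delta(n)\Delta(k)\,\Delta(\phi)^2/(\prod_{r=0}^{l-1}r!)^2$. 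Since $k$ and $n$ are strictly decreasing, $\Delta(n)\Delta(k)>0$, and after the rescaling $\phi=\psi/\sqrt x$ the surviving factor is a positive Mehta (Gaussian) integral of $\Delta(\psi)^2$. This yields $f_{k,n}(x)\sim C\,e^{lx}x^{-l^2/2}$ with $C>0$, so in particular $f_{k,n}\not\equiv0$, contradicting the identity-theorem conclusion. Hence $f_{k,n}$ has no zero on $(0,\infty)$, and together with nonnegativity this gives $f_{k,n}(x)>0$ for all $x>0$. The delicate part is making the asymptotic analysis rigorous (the confluent expansion of the alternants and dominated convergence after rescaling); the \emph{sign}, however, is transparent, since both the coefficient $\Delta(n)\Delta(k)$ and the Mehta integral of $\Delta(\psi)^2$ are manifestly positive.
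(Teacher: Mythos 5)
Your proposal is correct in outline but follows a genuinely different route from the paper. The paper works \emph{differentially}: from $I_j'=\tfrac12(I_{j-1}+I_{j+1})$ it derives a linear ODE $\partial f/\partial x=\Delta_{discr}f+2lf$ on the infinite vector $(f_{k,n}(x))_{k\in Y(\zz^l)}$ (with $n$ fixed), checks via Bessel estimates that this vector is $l_2$-valued, and invokes invariance of the positive quadrant of $l_2$ under the flow of a bounded linear field whose off-diagonal coefficients are positive; strict positivity then follows because a zero of $f_{p,n}$ at $x_0>0$ forces all (nonnegative) derivatives to vanish on $[0,x_0]$ and propagates through neighbouring diagrams to contradict $f_{n,n}(0)=I_0(0)^l>0$. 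You instead work \emph{multiplicatively}: the semigroup law $g_{x+y}=g_xg_y$ plus Cauchy--Binet gives $f_{k,n}(x+y)=\sum_s f_{s,n}(x)f_{k,s}(y)$, nonnegativity comes from the total nonnegativity of the Toeplitz matrix of $e^{\frac x2(z+\frac1z)}$ (this is essentially the classical Edrei/Aissen--Schoenberg--Whitney characterization of totally positive two-sided sequences, which your limit of bidiagonal factors reproves), the diagonal minors are Gram determinants hence positive, and a zero at $x_0$ propagates down the dyadic sequence $x_0/2^m$ to force $f_{k,n}\equiv0$ by the identity theorem. What your approach buys is the identification of the nonnegativity half of the theorem with classical total-positivity theory and a structurally transparent convolution identity; what the paper's approach buys is complete self-containedness and, crucially, a cheap endgame: it never needs asymptotics, only that some finite chain of neighbours connects $k$ to $n$.

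The one place where your argument is materially heavier than the paper's, and the only place I would press you, is the exclusion of $f_{k,n}\equiv0$ via the Laplace-method asymptotics $f_{k,n}(x)\sim Ce^{lx}x^{-l^2/2}$. The computation you sketch (Andreief, confluence of the alternants at $\phi=0$, Gaussian rescaling, positivity of $\Delta(k)\Delta(n)$ and of the Mehta integral) is correct and the sign of $C$ is indeed manifest, but making the localization and the error control in the confluent expansion rigorous is real work. You could replace it by something much lighter entirely within your own framework: for instance, a Lindstr\"om--Gessel--Viennot count of nonintersecting paths in the planar network of your elementary factors $(1+\alpha z)$, $(1+\beta/z)$ already shows the relevant minor of a large finite product is strictly positive, or one can check that the lowest-order Taylor coefficient of $f_{k,n}$ at $x=0$ is positive. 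Also be sure to justify Cauchy--Binet for the bi-infinite product (absolute convergence over $s\in Y(\zz^l)$, which your super-exponential decay of $I_m$ does supply) and the passage of total nonnegativity through the entrywise limit $N\to\infty$. With those details supplied, your proof is complete and independent of the paper's.
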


\begin{remark} Recall that a rectangular $l\times m$-matrix  is called {\it strictly totally positive} (nonnegative) \cite{abgryn, gantkr, gantkr2, pincus}, 
if all its minors of all the dimensions  are positive (nonnegative). Many results on characterization and properties of strictly totally 
positive matrices and their relations to other domains of mathematics, e.g., dynamical systems, mathematical physics, combinatorics, geometry and topology, are given in loc. cit. and in \cite{post, fomzel} (see also references in all these papers and books). A.Postnikov's paper \cite{post} 
deals with the 
matrices $l\times m$, $m\geq l$ or rank $l$ satisfying a weaker condition of nonnegativity of just higher rank minors. One of its 
main results provides an explicit combinatorial cell decomposition 
of the corresponding subset in the Grassmanian $G(l,m)$, called the {\it totally nonnegative Grassmanian.}  The cells are coded by combinatorial types of appropriate planar networks. 
K.Talaska \cite{talaska} obtained further development and generalization of  Postnikov's result. In particular, for a given point 
of the totally nonnegative Grassmanian the results of \cite{talaska} allow to decide what is its ambient cell and what are its affine coordinates in the cell. S.Fomin and A.Zelevinsky \cite{fomzel} studied a more general notion of total 
positivity (nonnegativity) for elements of a semisimple complex Lie group with a given double Bruhat cell decomposition. 
 They have proved that the totally positive parts of the double Bruhat cells are 
bijectively parametrized by the product of the positive quadrant $\rr_+^m$ and the positive subgroup of the maximal torus. 
\end{remark}

Theorem \ref{th} provides an explicit one-dimensional family (given by classical special functions) 
of  strictly totally positive matrices $A_{k,n}$ with $l$ rows 
and {\it infinite} number of columns. We hope that appearance of  strictly totally positive matrices in the context of the 
present paper would open a new direction of their applications.

%Theorem \ref{th} solves positively conjecture 3 from \cite[p. 342]{bt1}. 

Let us describe the relation of construction (\ref{det})  to Schur polynomials. Consider the generating function associated to a sequence $a_j$: 
$$M(a;w)=\sum_{j=-\infty}^{+\infty}a_jw^j, \ w\in\cc.$$
Recall that for every classical Young diagram $n\in Y(\zz^l)$ one denotes 
$$\Delta_n(z_1,\dots,z_l)=\left|\begin{matrix} & z_1^{n_1} & z_2^{n_1} &\dots & z_l^{n_1}\\
& z_1^{n_2} & z_2^{n_2} &\dots & z_l^{n_2}\\
& \dots & \dots &\dots &\dots\\
& z_1^{n_l} & z_2^{n_l} &\dots & z_l^{n_l}\end{matrix}\right|.$$
One denotes
$$M_n(a;z)=\Delta_n(z)\prod_{i=1}^lM(a;z_i), \text{ where } z=(z_1,\dots, z_l).$$

\begin{lemma} One has the following formula 
\begin{equation}M_n(a;z)=\sum_{k\in\zz^l}f_{k,n}z^k.\label{gener}\end{equation}
\end{lemma}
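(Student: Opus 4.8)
The plan is to expand both factors of $M_n(a;z)$ explicitly and to collect the coefficient of each monomial $z^k:=z_1^{k_1}\cdots z_l^{k_l}$. First I would expand the product of generating functions as a single multi-indexed series,
$$\prod_{i=1}^l M(a;z_i)=\sum_{m\in\zz^l}\Big(\prod_{i=1}^l a_{m_i}\Big)z^m,$$
noting that this is a well-defined formal Laurent series in $z_1,\dots,z_l$, since each coefficient is a single product. Next I would write the generalized Vandermonde determinant through the Leibniz formula,
$$\Delta_n(z)=\sum_{\tau\in S_l}\sign(\tau)\prod_{j=1}^l z_j^{n_{\tau(j)}},$$
which is a Laurent polynomial (finitely many monomials); hence its product with the series above is again an honest formal Laurent series and no convergence issue arises.

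Multiplying the two expressions and grouping the powers of each variable $z_j$ gives
$$M_n(a;z)=\sum_{\tau\in S_l}\sign(\tau)\sum_{m\in\zz^l}\prod_{j=1}^l a_{m_j}\,z_j^{m_j+n_{\tau(j)}}.$$
For a fixed $\tau$ I would substitute $k_j=m_j+n_{\tau(j)}$, i.e. $m_j=k_j-n_{\tau(j)}$; as $m$ ranges over $\zz^l$ so does $k$, and the exponent of $z_j$ becomes $k_j$. Collecting the coefficient of $z^k$ then yields
$$[z^k]\,M_n(a;z)=\sum_{\tau\in S_l}\sign(\tau)\prod_{j=1}^l a_{k_j-n_{\tau(j)}}.$$

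It remains to recognize the right-hand side as the determinant $f_{k,n}=\det A_{k,n}$. Re-indexing the product over $j$ by $i=\tau(j)$ and replacing the summation variable $\tau$ by $\rho=\tau^{-1}$ (which has the same sign) turns the last sum into $\sum_{\rho\in S_l}\sign(\rho)\prod_{i=1}^l a_{k_{\rho(i)}-n_i}$, which is exactly the Leibniz expansion of $\det(a_{k_j-n_i})_{i,j=1}^l$, the entry in row $i$ and column $j$ being $a_{k_j-n_i}$ as in (\ref{det}). This proves (\ref{gener}). The entire argument is a formal computation; the only point demanding care is the bookkeeping of the permutation indices, namely matching the column permutation $\tau$ arising from $\Delta_n$ with the permutation in the Leibniz formula for $\det A_{k,n}$, and checking that the row/column convention of $A_{k,n}$ is respected.
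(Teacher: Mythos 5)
Your computation is correct and is essentially the paper's own argument: the paper simply states in one line that the Laurent coefficient of $z^k$ in $M_n(a;z)$ is $\sum_{\sigma\in S_l}(-1)^{\sigma}a_{k_1-n_{\sigma(1)}}\cdots a_{k_l-n_{\sigma(l)}}$ and identifies it with $\det A_{k,n}$, which is exactly what you derive with the Leibniz expansion and the substitution $m_j=k_j-n_{\tau(j)}$. Your extra care with the permutation bookkeeping is sound (indeed, the sum over $\tau$ indexed by columns is already the Leibniz form of $\det(a_{k_j-n_i})$, so the final re-indexing is optional).
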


\begin{proof} The Laurent coefficient at $z^k$ of the function $M_n(a;z)$ 
equals the sum $(-1)^{\sigma}a_{k_1-n_{\sigma(1)}}\dots a_{k_l-n_{\sigma(l)}}$ 
taken over all the permutations $\sigma\in S_l$. The latter sum is obviously equal to the determinant $f_{k,n}$, see (\ref{det}).
\end{proof}

\begin{remark} \label{resign}  The determinants $f_{k,n}$ are well-defined  for every $k,n\in\zz^l$. One has 
$f_{k,n}=0$ if either $k_i=k_j$, or $n_i=n_j$ for some $i\neq j$. If a tuple $\wt k\in\zz^l$ is obtained from another tuple 
$k$ by a permutation $\sigma$, 
then $f_{\wt k, n}=(-1)^{\sign(\sigma)}f_{k,n}$, where $\sign(\sigma)$ is the parity of the permutation $\sigma$. Analogous statement 
holds for the other parameter $n$. The function $M_n(a;z)$ is obviously anti-symmetric in $z=(z_1,\dots,z_l)$. This together with 
Theorem \ref{th}  implies that for every given two-sided Young diagram $n$ 
and $a_j=I_j(x)$ its Laurent coefficient with multi-index $k$ is positive, whenever the order of the components $k_j$ differs from 
the decreasing one by an even permutation. 
\end{remark}
Set
$$\delta=(l-1,l-2,\dots,0)\in Y(\zz^l),$$
$$s_{\la}(z)=\frac{\Delta_{\delta+\la}(z)}{\Delta_{\delta}(z)}, \ z=(z_1,\dots,z_l), \ \la\in\zz^l.$$
Recall that if $\la$ is a classical Young diagram, then by definition, $s_{\la}$ is the  {\it Schur polynomial}  associated to $\la$, 
see \cite[p. 40]{mcd}. If $\la\in\zz^l$ is not 
a Young diagram, then we will call $s_{\la}$ a {\it Schur rational function}.
%, cf. loc. cit. 
\begin{corollary} For every $\la\in \zz^l$ one has 
$$M_{\delta+\la}(a;z)=s_{\la}(z) M_{\delta}(a;z).$$
\end{corollary}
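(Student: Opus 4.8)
The plan is to reduce the statement to the defining formula for $M_n(a;z)$ and the definition of $s_\la$, after which everything is purely formal. By definition one has $M_n(a;z)=\Delta_n(z)\prod_{i=1}^l M(a;z_i)$, so the product $\prod_{i=1}^l M(a;z_i)$ enters as a common factor independent of the index $n$. The whole dependence on $n$ is carried by the Vandermonde-type determinant $\Delta_n(z)$. Thus I expect the identity to come out simply by substituting $n=\delta+\la$ and $n=\delta$ and comparing.

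Concretely, I would first write out the two sides: $M_{\delta+\la}(a;z)=\Delta_{\delta+\la}(z)\prod_{i=1}^l M(a;z_i)$ and $M_{\delta}(a;z)=\Delta_{\delta}(z)\prod_{i=1}^l M(a;z_i)$. Rather than dividing — which would require arguing that $\Delta_\delta(z)$ does not vanish — I would prefer to multiply $M_\delta$ by $s_\la$ and verify the result equals $M_{\delta+\la}$. Using the definition $s_\la(z)=\Delta_{\delta+\la}(z)/\Delta_\delta(z)$, i.e. $s_\la(z)\Delta_\delta(z)=\Delta_{\delta+\la}(z)$, one obtains
$$s_\la(z)\,M_\delta(a;z)=s_\la(z)\,\Delta_\delta(z)\prod_{i=1}^l M(a;z_i)=\Delta_{\delta+\la}(z)\prod_{i=1}^l M(a;z_i)=M_{\delta+\la}(a;z),$$
which is the claim. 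Proceeding this way sidesteps any question of division by zero, since only the polynomial relation $s_\la\Delta_\delta=\Delta_{\delta+\la}$ among determinants is used.

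There is essentially no obstacle here: the corollary is a direct consequence of the multiplicative structure of $M_n(a;z)$, in which the factor $\prod_{i=1}^l M(a;z_i)$ is common and the dependence on the diagram sits entirely in $\Delta_n$. The only point worth stating explicitly is that $s_\la$ was \emph{defined} as the ratio $\Delta_{\delta+\la}/\Delta_\delta$, so that the Schur (rational) function appears precisely as the factor relating $\Delta_{\delta+\la}$ to $\Delta_\delta$; no appeal to Theorem~\ref{th} or to the combinatorial theory of Schur polynomials is needed.
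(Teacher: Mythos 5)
Your proof is correct and matches the paper's intent exactly: the corollary is stated there without proof precisely because it follows immediately from the definition $M_n(a;z)=\Delta_n(z)\prod_{i=1}^l M(a;z_i)$ together with the defining relation $s_\la(z)\Delta_\delta(z)=\Delta_{\delta+\la}(z)$. Your care in multiplying rather than dividing is a reasonable touch but does not change the substance.
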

 \begin{corollary} 
 $$M_n(a;z)=\sum_{k\in Y(\zz^l)}f_{k,n}\Delta_k(z)=\Delta_{\delta}(z)\sum_{k\in Y(\zz^l)}f_{k,n}s_{k-\delta}(z).$$
 % $$\frac{M_n}{M_{\delta}}(a;z)=s_{n-\delta}(z)=\frac{\Delta_{\delta}(z)}{M_{\delta}(a;z)}\sum_{k\in Y(\zz^l)}f_{k,n}s_{k-\delta}(z).$$
 \end{corollary}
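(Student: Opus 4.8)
The plan is to prove both equalities by purely formal manipulation, deducing everything from the generating-function identity (\ref{gener}) of the preceding Lemma together with the antisymmetry properties recorded in Remark \ref{resign}; I emphasize that the positivity Theorem \ref{th} plays no role here, so the corollary is a statement valid for an arbitrary two-sided sequence $a_j$.

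First I would start from $M_n(a;z)=\sum_{k\in\zz^l}f_{k,n}z^k$ and regroup the sum over $\zz^l$ according to the orbits of the symmetric group $S_l$ acting by permuting the $l$ components of the multi-index $k$. By Remark \ref{resign}, any $k$ with two equal components has $f_{k,n}=0$, so those terms drop out; the surviving multi-indices are exactly those with pairwise distinct entries, and each such orbit contains a unique strictly decreasing representative, i.e. an element of $Y(\zz^l)$. Thus it suffices to evaluate, for a fixed $\kappa\in Y(\zz^l)$, the partial sum $\sum_{\sigma\in S_l}f_{\sigma(\kappa),n}\,z^{\sigma(\kappa)}$ over the orbit of $\kappa$. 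On each orbit I would apply the sign rule $f_{\sigma(\kappa),n}=(-1)^{\sign\sigma}f_{\kappa,n}$ from Remark \ref{resign} to pull out the constant $f_{\kappa,n}$, leaving the antisymmetrized monomial sum $\sum_\sigma(-1)^{\sign\sigma}z^{\sigma(\kappa)}$, which I recognize as the determinant $\Delta_\kappa(z)=\det(z_j^{\kappa_i})$ via the Leibniz expansion. Summing over $\kappa\in Y(\zz^l)$ yields the first claimed equality $M_n(a;z)=\sum_{k\in Y(\zz^l)}f_{k,n}\Delta_k(z)$. The second equality is then immediate from the definition $s_\lambda(z)=\Delta_{\delta+\lambda}(z)/\Delta_\delta(z)$: setting $\lambda=k-\delta$ gives $\Delta_k(z)=\Delta_\delta(z)\,s_{k-\delta}(z)$, and factoring $\Delta_\delta(z)$ out of the sum completes the argument.

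This computation has no deep obstacle; the only place demanding care is the bookkeeping of signs and permutation conventions — specifically, checking that the convention under which $f_{\sigma(\kappa),n}=(-1)^{\sign\sigma}f_{\kappa,n}$ is compatible with the chosen action $\sigma\mapsto z^{\sigma(\kappa)}$, so that the antisymmetrized monomial sum comes out to $+\Delta_\kappa(z)$ rather than its negative. I would fix this once and for all by writing $z^{\sigma(\kappa)}=\prod_j z_{\sigma(j)}^{\kappa_j}$ and comparing directly with the Leibniz formula for $\det(z_j^{\kappa_i})$; after that normalization both equalities follow by termwise identification of Laurent coefficients. (One may note in passing that for $k\in Y(\zz^l)$ the shift $k-\delta$ is weakly decreasing but need not be nonnegative, so $s_{k-\delta}$ is in general a Schur rational function; this does not affect the identity $\Delta_k=\Delta_\delta\,s_{k-\delta}$, which holds as an equality of rational functions.)
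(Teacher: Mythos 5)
Your proof is correct and is exactly the argument the paper intends: the corollary is stated without proof precisely because it follows from the Lemma (formula (\ref{gener})) and the antisymmetry/vanishing properties of Remark \ref{resign} by grouping the Laurent expansion into $S_l$-orbits, which is what you do. Your attention to the sign convention (checking that the antisymmetrized orbit sum is $+\Delta_\kappa(z)$ via the Leibniz expansion of $\det(z_j^{\kappa_i})$) and your closing remark that $s_{k-\delta}$ is in general only a Schur rational function are both consistent with the paper's conventions, so there is nothing to add.
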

 
 %begin-CCC
 The main application of our results concerns the family 
 \begin{equation}\frac{d\phi}{dt}=-\sin \phi + B + A \cos\omega t, \ A,\omega>0, \ B\geq0.\label{josbeg}\end{equation}
  of  {\it nonlinear} equations, which arises in several models in physics, mechanics and geometry. For example, it describes 
the overdamped model of the 
Josephson junction (RSJ - model)  in superconductivity (our main motivation), see \cite{josephson, stewart, mcc, bar, schmidt}; 
it arises in  planimeters, see  \cite{Foote, foott}. 
Here $\omega$ is a fixed constant, and $(B,A)$ are the parameters. Set 
$$\tau=\omega t, \ l=\frac B\omega, \ \mu=\frac A{2\omega}.$$
The variable change $t\mapsto \tau$ transforms (\ref{josbeg}) to a 
non-autonomous ordinary differential equation on the two-torus $\mathbb T^2=S^1\times S^1$ with coordinates 
$(\phi,\tau)\in\rr^2\slash2\pi\zz^2$: 
\begin{equation} \dot \phi=\frac{d\phi}{d\tau}=-\frac{\sin \phi}{\omega} + l + 2\mu \cos \tau.\label{jostor}\end{equation}
The graphs of its solutions are the orbits of the vector field 
\begin{equation}\begin{cases} & \dot\phi=-\frac{\sin \phi}{\omega} + l + 2\mu \cos \tau\\
& \dot \tau=1\end{cases}\label{josvec}\end{equation}
on $\mathbb T^2$. The {\it rotation number} of its flow, see \cite[p. 104]{arn},  is a function $\rho(B,A)$ of parameters. 
%(Normalization convention: the rotation number of a usual circle rotation equals the rotation angle divided by $2\pi$.) 
%The $B$-axis will be called the {\it abscissa,} and the $A$-axis will be called the {\it ordinate.}

The  {\it phase-lock areas} are the level subsets of the rotation number in the $(B,A)$-plane 
with non-empty interior. They have been studied 
by V.M.Buchstaber, O.V.Karpov, S.I.Tertychnyi et al, see \cite{bt1, 4} and references therein and in Section 3 below.  
Each phase-lock area is an infinite chain of adjacent domains separated by {\it adjacency points}. V.M.Buchstaber and S.I.Tertychnyi 
have described coordinates of a wide class of adjacency points \cite{bt1} and conjectured that this is the complete list of adjacencies. 
This was done via reduction of the family of non-linear equations to two 
families (\ref{heun1}), (\ref{heun2}) of second order linear differential equations of double confluent Heun type. 
They have shown that adjacencies correspond exactly to those parameter values, for which the linear equations have  non-trivial holomorphic 
solutions at 0. They formulated conjectures on the existence of their holomorphic solutions, which would imply the above conjecture 
on the adjacencies. It appears that the first linear equation cannot have polynomial solutions, while the second one can. 
The main conjecture, which implies the others, says that if 
the second linear equation has a polynomial solution, then the first linear equation does not have holomorphic solution at zero. Buchstaber 
and Tertychnyi have reduced it to their other conjecture saying that the determinants $f_{\delta +(1,\dots,1),\delta}(x)$ are non-zero 
for every $x>0$ \cite[conjecture 3, p.342]{bt1}. They have proved their positivity  for $l\leq3$ and arbitrary $x>0$ 
and also for all $l$ and small $x>0$ dependently on $l$ \cite[appendixes 1, 2]{bt2}. 

Theorem \ref{th} is a more general result, which implies all the above conjectures.

Theorem \ref{th} will be proved in Section 2. Its  proof is based on 
the following observation. The derivative of each modified Bessel function $I_j$ is the half-sum of its neighbors $I_{j\pm1}$.  
This implies that the derivative of each determinant  $f_{k,n}$ is a linear combination with positive coefficients 
of other determinants $f_{k',n}$ with $k'$  obtained  from $k$ by adding $\pm1$ to some component. The surprising idea is to write 
the latter formula for derivative not just for a single determinant $f_{k,n}$,  but for all the determinants $f_{k,n}$  with a fixed $n$ 
and all $k\in Y(\zz^l)$ together. 
This yields a linear  ordinary differential equation on the infinite-dimensional vector function $(f_{k,n}(x))|_{k\in Y(\zz^l)}$ with 
the right-hand side being equal to the discrete Laplacian plus the multiplication by $2l$.  The latter right-hand side represents a 
 linear bounded vector field on the space  $l_2$ of infinite sequences $(f_k)_{k\in Y(\zz^l)}$ for which the positive quadrant 
 $\{ f_k\geq0 \ | \ k\in Y(\zz^l)\}$ is an invariant subset. It is shown that the infinite vector function $(f_{k,n}(x))_{k\in Y(\zz^l)}$ 
 is $l_2$-valued.  The values at 0 of all the determinants being non-negative, the initial condition 
 belongs to the positive quadrant. This implies that the above vector of  determinants $f_{k,n}(x)$  lies in the positive quadrant  
 for all $x>0$, and hence, each determinant is non-negative for $x>0$. 
 Its strict positivity is deduced from the same differential equation, which implies that all its derivatives are non-negative and some of them 
does not vanish at zero.

Section 3 presents applications of Theorem \ref{th} to double-confluent Heun equations and  nonlinear equations 
(\ref{josbeg}), (\ref{jostor}) related to the Josephson effect.  
It contains an introduction to the subject, an overview of previous results and proof of the above-mentioned conjectures of V.M.Buchstaber and S.I.Tertychnyi on Heun equations and adjacencies of phase-lock areas. 
%end-CCC

\section{Proof of Theorem \ref{th}}
In the proof of Theorem \ref{th} we use the following classical properties of the modified Bessel functions $I_j$ of the first kind, see 
\cite[section 3.7]{watson}. 
\begin{equation} I_j=I_{-j}; \label{sym}\end{equation}
\begin{equation} I_j|_{x>0}>0; \ I_j(0)=0 \text{ for } j\neq0; \ I_0(0)>0;\label{ioo}\end{equation}
\begin{equation} I_0'=I_1; \ I_j'=\frac12(I_{j-1}+I_{j+1});\label{differ}\end{equation}
\begin{equation} I_j(2y)=\sum_{s=0}^{\infty}\frac{y^{j+2s}}{s!(j+s)!} \text{ for every } j\geq0.\label{series}\end{equation}
The next two propositions and corollary together imply that for every fixed $n\in Y(\zz^l)$ the vector function $(f_{k,n}(x))_{k\in Y(\zz^l)}$ 
  is a solution of a bounded linear ordinary differential equation in the Hilbert space $l_2$ 
of infinite sequences $(f_k)_{k\in Y(\zz^l)}$: a phase curve of a bounded linear vector field. 
 We show that the positive quadrant $\{ f_k\geq0 \ | \ k\in Y(\zz^l)\}\subset l_2$ is invariant under the positive flow of the latter field. This implies 
 that $f_{k,n}(x)\geq0$ for all $x\geq0$, and then we easily deduce that the latter inequality is strict for $x>0$. This will prove Theorem \ref{th}. 

Let us recall how the discrete Laplacian $\Delta_{discr}$ acts on the space of functions 
$f=f(k)$ in $k\in\zz^l$. For every $j=1,\dots,l$ let $T_j$ denote the 
corresponding shift operator: 
$$(T_jf)(k)=f(k_1,\dots,k_{j-1},k_j-1, k_{j+1},\dots,k_l).$$
Then 
\begin{equation} \dd=\sum_{j=1}^l(T_j+T_j^{-1}-2).\label{lapl1}\end{equation}
Thus, one has 
$$(\Delta_{discr}f)(p)=\sum_{s=1}^l(f(p_1,\dots,p_{s-1},p_s-1,p_{s+1},\dots,p_l)$$
\begin{equation}+f(p_1,\dots,p_{s-1},p_s+1,p_{s+1},\dots,p_l))-2lf(p).\label{lapl2}\end{equation}
\begin{remark} \label{restrict} We will deal with the class of sequences $f(k)$ for which $f(k)=0$ whenever $k_i=k_j$ for some $i\neq j$. It includes   
$f(k)=f_{k,n}(x)$ with fixed $n\in\zz^l$ and $x\in\mathbb R$, see Remark \ref{resign}. In this case 
the discrete Laplacian is well-defined by the above formulas (\ref{lapl1}), (\ref{lapl2}) 
on the restrictions of the latter sequences  $f(k)$ to the two-sided Young diagrams $k\in Y(\zz^l)$. 
\end{remark}

\begin{proposition} For every $l\geq1$ and $n\in Y(\zz^l)$ the vector function $(f(x,k)=f_{k,n}(x))_{k\in Y(\zz^l)}$ satisfies the following linear differential equation:
\begin{equation} \frac{\partial f}{\partial x}=\Delta_{discr}f+2lf.
\label{diffs}\end{equation}
%\frac12\sum_{j=1}^l(f_{(k_1,\dots,k_{j-1}, k_j-1, k_{j+1},\dots,k_l)}(x)+
%f_{(k_1,\dots,k_{j-1}, k_j+1,k_{j+1}\dots,k_l)}(x)).
\end{proposition}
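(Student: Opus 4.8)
The plan is to differentiate the determinant $f_{k,n}(x)=\det\big(I_{k_j-n_i}(x)\big)_{i,j=1}^l$ directly. Since the index $k_j$ occurs only in the $j$-th column of $A_{k,n}$, I would differentiate column by column: by the Leibniz rule for determinants, $\partial_x f_{k,n}$ is the sum over $j=1,\dots,l$ of the determinant obtained from $A_{k,n}$ by replacing its $j$-th column by the $x$-derivative of that column.

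The key input is the half-sum recurrence (\ref{differ}). I would first note that $I_m'=\tfrac12(I_{m-1}+I_{m+1})$ holds for \emph{every} $m\in\zz$, the exceptional value $I_0'=I_1$ being reconciled with $\tfrac12(I_{-1}+I_1)$ through the symmetry (\ref{sym}); this matters since $k_j-n_i$ may vanish. Applying it entrywise, the derivative of the $j$-th column of $A_{k,n}$ becomes a half-sum of the $j$-th columns of the matrices obtained by replacing $k_j$ by $k_j-1$ and by $k_j+1$. Multilinearity of the determinant in its columns then rewrites the $j$-th summand as a positive combination of the two neighbouring determinants $(T_jf)(k)$ and $(T_j^{-1}f)(k)$, in the notation of (\ref{lapl1}). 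Summing over $j$, these $2l$ neighbour determinants assemble into $\sum_{j=1}^l(T_j+T_j^{-1})=\Delta_{discr}+2l$ by (\ref{lapl1}), reproducing the right-hand side of (\ref{diffs}) up to the overall factor $\tfrac12$ carried by the half-sum recurrence (equivalently, (\ref{diffs}) is this equation in the variable $x/2$, natural in view of (\ref{series})).

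Only one point needs real care. For $k\in Y(\zz^l)$ a shift $k_j\mapsto k_j\pm1$ may leave the Young chamber and produce a tuple with two equal components, so that the operators $T_j^{\pm1}$ a priori take $f$ outside $Y(\zz^l)$. Here I would invoke Remark \ref{restrict}: the discrete Laplacian is well-defined on $Y(\zz^l)$ precisely because $f_{k,n}$ extends by $0$ to the locus where two $k$-components coincide (Remark \ref{resign}). This is consistent with the differentiation step, since a shift that makes two $k$-components equal produces a matrix with two equal columns, whose determinant is $0$ — exactly the value assigned to the corresponding boundary term by (\ref{lapl1})--(\ref{lapl2}). Thus the identity, first established for generic $k\in\zz^l$, restricts consistently to the two-sided Young diagrams, which is all that is claimed.

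As a cross-check I would also record the generating-function derivation: differentiating $M_n(a;z)=\Delta_n(z)\prod_{i=1}^l g_x(z_i)$ of Lemma (\ref{gener}) in $x$ and using $\partial_x g_x(z_i)=\tfrac12(z_i+z_i^{-1})g_x(z_i)$ shows $\partial_x M_n=\big(\tfrac12\sum_i(z_i+z_i^{-1})\big)M_n$; reading off the Laurent coefficient at $z^k$ via (\ref{gener}) reproduces the same combination of shifted determinants, with the vanishing on collisions built in automatically. The main obstacle is therefore not the computation, which is the routine Leibniz/multilinearity bookkeeping, but the boundary consistency on $\partial Y(\zz^l)$ described above.
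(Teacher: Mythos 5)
Your proof is correct and is precisely the argument that the paper compresses into its one-line justification: column-wise Leibniz differentiation of the determinant, the recurrence (\ref{differ}) extended to all integer indices via (\ref{sym}), multilinearity in the columns, and the vanishing convention of Remarks \ref{resign} and \ref{restrict} to absorb the shifts that leave $Y(\zz^l)$ (two equal columns giving a zero determinant). You are also right that the computation literally yields $\frac{\partial f}{\partial x}=\tfrac12\left(\Delta_{discr}+2l\right)f$, so (\ref{diffs}) as printed holds only after the harmless rescaling of the variable that you note; this factor of $\tfrac12$ changes nothing in the subsequent positivity and boundedness arguments.
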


Equation (\ref{diffs}) follows immediately from definition, equation (\ref{differ}) and Remarks \ref{resign}, \ref{restrict}. 

\begin{remark} \label{new} For every $k\in Y(\zz^l)$ the $k$-th component of the right-hand side in (\ref{diffs}) is a linear combination with 
strictly positive coefficients of 
the components $f(x,k')$ with $k'\in Y(\zz^l)$ obtained from $k=(k_1,\dots,k_l)$ by adding $\pm1$ to some $k_i$. This follows from 
(\ref{lapl2}), (\ref{diffs}). 
\end{remark}
%
%\begin{remark} The right-hand side in equation (\ref{diffs}) can be expressed via the discrete Laplacian $\Delta_k$  in the variable $k$. 
%Its value on a  function $a(k)$, $k\in\zz^l$, equals $(\Delta_ka)(j)=a(j-1)+a(j+1)-2a(j)$. Thus, 
% system (\ref{diffs}) is equivalent to the following differential equation on the function $f_k(x)=f(x,k)$ in two variables: 
% $$\frac{\partial f}{partial x}=\frac12\Delta_kf+2f.$$
% \end{remark}

\begin{proposition} \label{term} For every constant $R>1$ and every $j\geq R^2$ one has 
\begin{equation}|I_j(x)|<\frac{R^{j}}{j!} \text{ for every }  0\leq x\leq R.\label{ineqj}\end{equation}
\end{proposition}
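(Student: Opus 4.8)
The plan is to estimate $I_j(x)$ directly from the power series (\ref{series}), which represents $I_j$ as a series with all coefficients positive. Writing $x=2y$ with $0\le y\le R/2$, every term $\frac{y^{j+2s}}{s!(j+s)!}$ is nonnegative, so by (\ref{ioo}) we have $|I_j(x)|=I_j(x)$ and it suffices to bound the series from above. First I would replace $y$ by its maximal value $R/2$ and factor out the leading behaviour:
\[
I_j(2y)\le\sum_{s=0}^{\infty}\frac{(R/2)^{j+2s}}{s!(j+s)!}=\frac{R^j}{2^j\,j!}\sum_{s=0}^{\infty}\frac{(R^2/4)^s\,j!}{s!\,(j+s)!}.
\]
The prefactor $R^j/(2^jj!)$ already carries the target bound $R^j/j!$ up to the gain $2^{-j}$; the whole point of the argument is that this gain must outweigh the residual series.

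Next I would control the residual sum $S:=\sum_{s\ge0}\frac{(R^2/4)^s j!}{s!(j+s)!}$ using the elementary inequality $\frac{j!}{(j+s)!}=\frac{1}{(j+1)(j+2)\cdots(j+s)}\le j^{-s}$, which gives
\[
S\le\sum_{s=0}^{\infty}\frac{(R^2/(4j))^s}{s!}=e^{R^2/(4j)}.
\]
At this point the hypothesis $j\ge R^2$ enters decisively: it yields $R^2/(4j)\le\tfrac14$, hence $S\le e^{1/4}<2$. Combining the two estimates and using $R>1$, which forces $j\ge R^2>1$ and so $j\ge 2$ (in particular $2^j\ge 2>S$), I conclude
\[
|I_j(x)|=I_j(x)\le\frac{R^j}{2^j\,j!}\,S<\frac{R^j}{2^j\,j!}\cdot 2^j=\frac{R^j}{j!},
\]
which is the desired bound, valid uniformly for all $0\le x\le R$.

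This estimate is quite robust, so there is no serious obstacle; the only genuine care is in the two points of \emph{strictness}. At the endpoint $x=R$ the first inequality $y\le R/2$ becomes an equality, so strictness cannot come from bounding $y$ there and must be supplied entirely by the step $S<2^j$; I would therefore keep that inequality strict (it is, since $S\le e^{1/4}<2\le 2^j$). The second point is the uniform bound on the ratio $j!/(j+s)!$, which must hold for all $s$ simultaneously so that the residual series can be summed to $e^{R^2/(4j)}$; the factorial comparison $\frac{j!}{(j+s)!}\le j^{-s}$ does exactly this and is where the large-index regime $j\ge R^2$ makes the exponent harmless.
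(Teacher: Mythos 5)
Your proof is correct and follows essentially the same route as the paper: both bound the series (\ref{series}) term by term using $y\le R/2$ and $\frac{j!}{(j+s)!}\le j^{-s}$, with the hypothesis $j\ge R^2$ neutralizing the factor $(R^2/j)^s$, so that the leftover $2^{-j}$ (times a sum bounded by a constant less than $2$) yields the strict inequality. The only cosmetic difference is that the paper sums a geometric majorant $\sum_s 2^{-(j+2s)}$ whereas you retain the $1/s!$ and bound the residual sum by $e^{1/4}$.
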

\begin{remark} \label{decr} The sequence $\frac{R^j}{j!}$ is bounded,  and it decreases in $j\geq R$. 
\end{remark}
\begin{proof} {\bf of Proposition \ref{term}.} Fix an arbitrary $j\geq R^2$. Let us estimate the terms of the series (\ref{series}). For every $s\geq0$ 
and $y\in[0,\frac R2]$ one has 
$$\frac{y^{j+2s}}{s!(j+s)!}\leq\frac{R^{j+2s}}{2^{j+2s}(j+s)!}\leq\frac{R^j}{j!}\frac1{2^{j+2s}}\left(\frac{R^2}j\right)^{s}\leq\frac1{2^{j+2s}}\frac{R^j}{j!}.$$
This together with (\ref{series}) implies (\ref{ineqj}). 
\end{proof}

\begin{corollary} \label{cl2} For every $l\geq1$, $n\in Y(\zz^l)$ and $x\geq0$ one has $(f_{k,n}(x))_{k\in Y(\zz^l)}\in l_2$. Moreover, 
there exists a function $C(R)=C_n(R)>0$ in $R>1$ such that 
\begin{equation}\sum_{k\in Y(\zz^l)}|f_{k,n}(x)|^2<C(R) \ \text{ for every } 0\leq x\leq R.\label{hilb}\end{equation}
\end{corollary}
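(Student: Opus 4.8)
The plan is to combine Hadamard's inequality with the decay estimate of Proposition \ref{term}. Writing $A_{k,n}=(I_{k_j-n_i}(x))_{i,j}$, its $j$-th column is the vector $(I_{k_j-n_i}(x))_{i=1}^l$, so Hadamard's inequality (the determinant is bounded by the product of the column norms) yields
\[
|f_{k,n}(x)|^2\leq\prod_{j=1}^l\Bigl(\sum_{i=1}^l I_{k_j-n_i}(x)^2\Bigr).
\]
The key observation is that the right-hand side is a nonnegative quantity defined for \emph{every} $k\in\zz^l$, and that it is a product of functions of the individual coordinates $k_1,\dots,k_l$.

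First I would pass from the sum over $Y(\zz^l)$ to the sum over the full lattice $\zz^l$ (legitimate since $Y(\zz^l)\subset\zz^l$ and all summands are nonnegative), and then factorize. Because all terms are nonnegative, the rearrangement and factorization of the infinite sums are unconditionally valid, giving
\[
\sum_{k\in Y(\zz^l)}|f_{k,n}(x)|^2\leq\sum_{k\in\zz^l}\prod_{j=1}^l\sum_{i=1}^l I_{k_j-n_i}(x)^2=\prod_{j=1}^l\Bigl(\sum_{m\in\zz}\sum_{i=1}^l I_{m-n_i}(x)^2\Bigr).
\]
For each fixed $i$ the index shift $p=m-n_i$ gives $\sum_{m\in\zz}I_{m-n_i}(x)^2=\sum_{p\in\zz}I_p(x)^2$, independently of $n$, so each factor equals $l\,S(x)$ with $S(x):=\sum_{p\in\zz}I_p(x)^2$, and hence $\sum_{k\in Y(\zz^l)}|f_{k,n}(x)|^2\leq(l\,S(x))^l$.

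It then remains only to bound $S(x)$ uniformly on $[0,R]$. Using the symmetry $I_p=I_{-p}$ from (\ref{sym}), I would write $S(x)=I_0(x)^2+2\sum_{p=1}^\infty I_p(x)^2$ and split the series at $p=\lceil R^2\rceil$. The finitely many terms with $1\leq p<R^2$ are continuous in $x$, hence uniformly bounded on the compact interval $[0,R]$; for $p\geq R^2$ Proposition \ref{term} gives $I_p(x)^2<(R^p/p!)^2$ for all $x\in[0,R]$, so by Remark \ref{decr} this tail is dominated by the convergent series $\sum_{p}(R^p/p!)^2$. Thus $\sup_{[0,R]}S<\infty$, and taking $C(R)=(l\sup_{[0,R]}S)^l$ proves (\ref{hilb}). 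The argument is essentially mechanical once the columnwise Hadamard bound is in place; the only step requiring a moment's care is the factorization over the full lattice, which is harmless precisely because every summand is nonnegative.
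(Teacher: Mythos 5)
Your proof is correct, but it takes a genuinely different route from the paper's. The paper does not use Hadamard's inequality: it fixes $R>1+\max_i|n_i|$, observes that all $k\in Y(\zz^l)$ with $|k|_{\max}-|n|_{\max}<R^2$ form a finite set, and for the remaining $k$ isolates one column of $A_{k,n}$ whose entries are all $I_j$ with $j\geq R^2$; expanding the determinant as a sum of $l!$ products and bounding the other entries by $M=\max_{j,\,0\leq x\leq R}I_j(x)$ gives $|f_{k,n}(x)|<l!\,M^{l-1}R^{N}/N!$ with $N=|k|_{\max}-|n|_{\max}$, and the sum over $k$ converges because the factorial beats the polynomial count of diagrams with given $|k|_{\max}$. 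Your argument replaces this with the columnwise Hadamard bound followed by a Tonelli factorization over the full lattice $\zz^l$, reducing everything to the single series $S(x)=\sum_{p\in\zz}I_p(x)^2$, whose tail you control by the same Proposition \ref{term}. What your version buys is a cleaner and more explicit bound, $\sum_k|f_{k,n}(x)|^2\leq(l\,S(x))^l$, with no need to count lattice points or treat a finite exceptional set separately; in fact the generating-function identity $g_x(z)g_x(z^{-1})=g_{2x}(z)$ gives $S(x)=I_0(2x)$, so one may take $C(R)=(l\,I_0(2R))^l+1$ outright. What the paper's version buys is elementarity (only the triangle inequality on the Leibniz expansion, no Hadamard). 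Two cosmetic points: the strict inequality in (\ref{hilb}) requires adding a constant to your $\bigl(l\sup_{[0,R]}S\bigr)^l$, and Remark \ref{decr} is not really needed for the convergence of $\sum_p(R^p/p!)^2$ — the ratio test already suffices.
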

\begin{proof} Fix an $n\in Y(\zz^l)$ and an $R>1+|n|_{\max}$, $|n|_{\max}=\max_i|n_i|$. Set 
$$|k|_{n,\max}=|k|_{\max}-|n|_{\max}; \ K=K(n,R)=\{ k\in Y(\zz^l) \ | \ |k|_{n,\max}\geq R^2\}.$$ 
It suffices to prove uniform boundedness in $x\in[0,R]$ of  sum (\ref{hilb}) taken through all $k\in K$, since the complement 
$Y(\zz^l)\setminus K$ is finite. Set 
$$M=\max_{j\in\zz, \ 0\leq x\leq R}I_j(x).$$
The number $M$ is finite, by (\ref{ineqj}) and Remark \ref{decr}. For every $k\in K$ one has 
\begin{equation} |f_{k,n}(x)|<
\frac{R^{|k|_{n,\max}}}{(|k|_{n,\max})!}l!M^{l-1} \text{ for every }  0\leq x\leq R.\label{ineqf}\end{equation} 
Indeed, if $k\in K$, then some column of the matrix $A_{k,n}$ consists of functions $I_j$ with 
$j\geq |k|_{n,\max}\geq R^2$, 
which satisfy inequality (\ref{ineqj}), by Proposition \ref{term} and (\ref{sym}). For the latter $j$  the 
right-hand sides of inequality (\ref{ineqj}) are no greater than 
$\frac{R^{|k|_{n,\max}}}{(|k|_{n,\max})!}$, whenever $x\in[0,R]$, by Remark \ref{decr}. The other matrix elements are no greater that $M$ on 
$[0,R]$. Therefore, the module $|f_{k,n}(x)|$ of its determinant  defined as sum of $l!$ products of functions $I_j$ satisfies inequality 
(\ref{ineqf}). This implies that the sum in (\ref{hilb}) through $k\in K$ is no greater than 
$$C(R)=l!M^{l-1}\sum_{k\in K}\frac{R^{|k|_{n,\max}}}{(|k|_{n,\max})!}<+\infty.$$ 
The corollary is proved.
 \end{proof}
 
\begin{definition} Let $\Omega$ be the closure of an open convex subset in a Banach space. For every $x\in\partial\Omega$ consider the 
union of all the rays issued from $x$ that intersect $\Omega$ in at least two distinct points (including $x$). The closure of the latter union of rays 
is a convex cone, which will be here referred to, as the {\it generating cone\footnote{The authors believe that this definition and the next proposition 
are well-known to specialists, but they have not found them in literature.}} $K(x)$.
\end{definition}

\begin{proposition} \label{inv} Let $H$ be a Banach space, $\Omega\subset H$ be as above. Let $v$ be a $C^1$ vector field on a neighborhood of 
the set $\Omega$ in $H$ such that $v(x)\in K(x)$ for every $x\in\partial\Omega$. 
Then the set $\Omega$ is invariant under the flow of the field $v$: each positive semitrajectory starting at $\Omega$ is 
contained in $\Omega$. 
\end{proposition}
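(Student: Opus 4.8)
This is the closed‑convex‑set case of Nagumo's viability (invariance) theorem, so my plan is to convert the boundary hypothesis into a statement about supporting functionals and then to control the distance to $\Omega$ along a trajectory by a Gronwall inequality. First I would record a dual description of the generating cone. Since $\Omega$ is the closure of an open convex set, it is closed and convex with nonempty interior, and for $x\in\partial\Omega$ one checks directly from the definition that $K(x)=x+T_\Omega(x)$, where $T_\Omega(x)=\overline{\bigcup_{t>0}t^{-1}(\Omega-x)}$ is the tangent cone: a ray from $x$ meets $\Omega$ in at least two points exactly when its direction is a feasible direction into the convex set $\Omega$. By the bipolar theorem a direction $w$ lies in $T_\Omega(x)$ if and only if $\langle\ell,w\rangle\le0$ for every $\ell$ in the normal cone $N_\Omega(x)=\{\ell\in H^{*}\ |\ \langle\ell,y-x\rangle\le0\ \text{ for all }y\in\Omega\}$, i.e. for every functional supporting $\Omega$ at $x$. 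Thus the hypothesis $v(x)\in K(x)$ is equivalent to the scalar inequalities $\langle\ell,v(x)\rangle\le0$ for all supporting functionals $\ell$ at $x$, which is the form I would actually use.

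Next I would bring in the distance function $d(y)=\operatorname{dist}(y,\Omega)$, which is convex and $1$‑Lipschitz and satisfies the duality formula $d(y)=\sup_{\|\ell\|\le1}(\langle\ell,y\rangle-s_\Omega(\ell))$, where $s_\Omega(\ell)=\sup_{\omega\in\Omega}\langle\ell,\omega\rangle$ is the support function; by the Banach--Alaoglu theorem and weak‑$*$ upper semicontinuity of the functional under the supremum, the supremum is attained at some unit $\ell$. Let $x(t)$ be the integral curve of $v$ with $x(0)\in\Omega$; since $v$ is $C^{1}$ near $\Omega$ the flow exists (Picard--Lindel\"of) and on any compact time interval $x(t)$ stays in a bounded set on which $v$ is Lipschitz with some constant $L$. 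Writing $\beta(t)=d(x(t))$, the target is the differential inequality $D^{+}\beta(t)\le L\,\beta(t)$; since $\beta(0)=0$, Gronwall's lemma then forces $\beta\equiv0$, i.e. $x(t)\in\Omega$ for all $t\ge0$, which is the asserted invariance.

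To obtain the inequality I would estimate the upper Dini derivative by comparing extremal functionals at nearby times. Choosing a unit functional $\ell_{t+h}$ attaining the supremum that defines $\beta(t+h)$ and using $\beta(t)\ge\langle\ell_{t+h},x(t)\rangle-s_\Omega(\ell_{t+h})$ gives $\beta(t+h)-\beta(t)\le\langle\ell_{t+h},x(t+h)-x(t)\rangle=h\langle\ell_{t+h},v(x(t))\rangle+o(h)$. If $\omega_{t+h}\in\partial\Omega$ is a point at which $\ell_{t+h}$ attains $s_\Omega$ and which (nearly) realizes the distance from $x(t+h)$ to $\Omega$, then $\ell_{t+h}$ supports $\Omega$ at $\omega_{t+h}$, so the reformulated hypothesis yields $\langle\ell_{t+h},v(\omega_{t+h})\rangle\le0$, and Lipschitzness gives $\langle\ell_{t+h},v(x(t))\rangle\le\|v(x(t))-v(\omega_{t+h})\|\le L\|x(t)-\omega_{t+h}\|\le L(\beta(t)+o(1))$. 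Substituting and passing to $\limsup_{h\to0^{+}}$ produces $D^{+}\beta(t)\le L\,\beta(t)$.

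The step I expect to be the genuine obstacle is the construction of the touching point $\omega_{t+h}$ with $\|x(t+h)-\omega_{t+h}\|$ comparable to $\beta(t+h)$: this is precisely the existence of a (near‑)metric projection of $x(t+h)$ onto $\Omega$ together with the fact that the maximizing functional supports $\Omega$ at that projection. In an arbitrary Banach space a closed convex set need not be proximinal, so I would handle this either by running the whole estimate with $\varepsilon$‑maximizing functionals and $\varepsilon$‑nearly‑projecting points and letting $\varepsilon\to0$ after taking the $\limsup$ (extracting the supporting functional by weak‑$*$ compactness of the unit ball), or, for the application of the present paper, by noting that there $H=l_2$ is a Hilbert space, where the projection $P_\Omega$ exists, $\ell_{t+h}=(x(t+h)-P_\Omega x(t+h))/\beta(t+h)$ supports $\Omega$ at $P_\Omega x(t+h)$, and $\|x(t+h)-P_\Omega x(t+h)\|=\beta(t+h)$ exactly, so that the estimate is immediate. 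The remaining ingredients --- the Gronwall step and the continuation of the conclusion to the whole existence interval --- are routine.
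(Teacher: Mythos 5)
Your proof is correct in substance but takes a genuinely different route from the paper's. You run the standard Nagumo--Brezis invariance argument: identify $K(x)=x+T_\Omega(x)$, dualize the boundary hypothesis into $\langle\ell,v(x)\rangle\le 0$ for all supporting functionals $\ell$ at $x$, and close a Gronwall estimate $D^{+}d(x(t),\Omega)\le L\,d(x(t),\Omega)$ by testing against the supporting functional at a (near-)metric projection of $x(t+h)$ onto $\Omega$. The paper instead argues softly by perturbation: it fixes an interior point $O\in Int(\Omega)$, considers $u_\var=v+\var w$ with $w(x)$ the vector from $x$ to $O$, notes that $u_\var(x)\in Int(K(x))$ on $\partial\Omega$ for $\var>0$ so that trajectories of $u_\var$ stay in $\Omega$, and then passes to $\var=0$ by continuous dependence of solutions on parameters together with closedness of $\Omega$. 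Your approach is quantitative and self-contained, yields the extra stability estimate $d(x(t),\Omega)\le e^{Lt}d(x(0),\Omega)$, and never uses that $\Omega$ has nonempty interior (only that it is closed and convex); its price is exactly the obstacle you flag, namely producing an exact supporting functional at a near-projection point in a general Banach space, which you resolve cleanly in the Hilbert case $H=l_2$ relevant to the application (in full generality one would invoke Bishop--Phelps or Ekeland, since your $\var$-approximate point is only an $\var$-maximizer of $\ell$ and the hypothesis requires an exact normal). By contrast, the paper's proof is shorter but leaves its key step --- that a field pointing strictly into $Int(K(x))$ at every boundary point has trajectories that ``obviously'' stay in $\Omega$ --- without argument, and in infinite dimensions that step is of essentially the same depth as the proposition itself; your Gronwall estimate is precisely what fills it in.
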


\begin{proof} Fix a point $O\in Int(\Omega)$. Consider the ``anti-Euler'' vector field $w$ on $H$: its vector at a point 
$x\in H$ is the vector $xO$ directed to $O$. Consider the family of vector fields $u_\var=v+\var w$. For every $\var>0$ 
and $x\in\partial\Omega$ one has $u_\var(x)\in Int(K(x))$. In other words, the field $u_\var$ with $\var>0$ is directed 
strictly inside the domain $\Omega$, and its trajectories obviously stay in $\Omega$. Hence, the latter statement holds for $\var=0$, by the 
theorem on the existence and uniqueness of solutions of bounded ordinary differential equation and continuity in parameter \cite{ch}. 
This proves the proposition.
\end{proof}

Now let us prove Theorem \ref{th}. Fix an $n\in Y(\zz^l)$. 
The right-hand side of differential equation (\ref{diffs}) is a bounded linear vector field on the Hilbert space $l_2$ 
of sequences $(f_k)_{k\in Y(\zz^l)}$. We will denote the latter vector field by $v$. Let 
$\Omega\subset l_2$ denote the ``positive quadrant'' defined by the inequalities $f_k\geq0$. For every point $x\in\partial\Omega$ the 
vector $v(x)$ lies in its generating cone $K(x)$: the components of the field $v$ are non-negative on $\Omega$, by Remark \ref{new}.  
The vector function $(f_{k,n}(x))_{k\in Y(\zz^l)}$ in $x\geq0$ is an $l_2$-valued  solution of the corresponding differential equation, 
by Corollary \ref{cl2}. One has $(f_{k,n}(0))_{k\in Y(\zz^l)}\in\Omega$:
\begin{equation}f_{k,n}(0)=0 \text{ whenever } k\neq n; \ f_{n,n}(0)=I_0^l(0)>0,\label{foo}\end{equation}
which follows from (\ref{ioo}). This together with Proposition \ref{inv} implies that 
\begin{equation} f_{k,n}(x)\geq0 \text{ for every } k\in Y(\zz^l) \text{ and } x\geq0.\label{nonneg}\end{equation}
 Now let us prove that the inequality is strict for all $k\in Y(\zz^l)$ and $x>0$. Indeed, let 
$f_{p,n}(x_0)=0$ for some $p=(p_1,\dots,p_l)\in Y(\zz^l)$ and $x_0>0$. All the derivatives of the function 
$f_{p,n}$ are non-negative, by (\ref{diffs}), Remark \ref{new} and (\ref{nonneg}). Therefore,  $f_{p,n}\equiv0$ 
on the segment $[0,x_0]$. This together with (\ref{diffs}), Remark \ref{new}  
and (\ref{nonneg}) implies that $f_{p',n}\equiv0$ on $[0,x_0]$ for every $p'$ obtained from $p$ by adding $\pm1$ to some component. 
We then get by induction that $f_{n,n}(0)=0$, -- a contradiction to (\ref{foo}). The proof of Theorem \ref{th} is complete.

\section{Applications to double confluent Heun equations and Josephson effect: entire solutions and adjacencies}

Here we prove the conjectures of V.M.Buchstaber and S.I.Tertychnyi from \cite{bt1} mentioned in the introduction. 
They concern the family of nonlinear equations (\ref{josbeg}): 
 \begin{equation}\dot \phi=\frac{d\phi}{dt}=-\sin \phi + B + A \cos\omega t, \ A,\omega>0, \ B\geq0.\label{jos}\end{equation}
We fix an arbitrary $\omega>0$ and consider family (\ref{jos}) depending on two variable parameters $(B,A)$. The variable change $\tau=\omega t$ 
transforms (\ref{jos}) to the differential equation (\ref{jostor}) on the two-torus $\mathbb T^2=S^1\times S^1$ with coordinates 
$(\phi,\tau)\in\rr^2\slash2\pi\zz^2$. Its solutions are tangent to the vector field 
\begin{equation}\begin{cases} & \dot\phi=-\frac{\sin \phi}{\omega} + l + 2\mu \cos \tau\\
& \dot \tau=1\end{cases}, \ \ l=\frac B{\omega}, \ \mu=\frac A{2\omega}\label{josvect}\end{equation}
on the torus. The {\it rotation number} of the equation (\ref{jos}) is, by definition, the rotation number 
of the flow of the field (\ref{josvect}), see \cite[p. 104]{arn}. It is a function $\rho(B,A)$ of parameters. 
(Normalization convention: the rotation number of a usual circle rotation equals the rotation angle divided by $2\pi$.) 
The $B$-axis will be called the {\it abscissa,} and the $A$-axis will be called the {\it ordinate.}

\begin{definition} (cf. \cite[definition 1.1]{4}) The {\it $l$-th phase-lock area} is the level set 
$\{B,A) \ | \ \rho(B,A)=l\}\subset\rr^2$, provided it has a non-empty interior. 
\end{definition}

\begin{remark}{\bf: phase-lock areas and Arnold tongues.} The behavior of phase-lock areas for small $A$ demonstrates the Arnold 
tongues effect \cite[p. 110]{arn}. The phase-lock areas are called ``Arnold tongues'' in \cite[definition 1.1]{4}. 
\end{remark}

Recall that the rotation number of system (\ref{jos}) has the physical meaning of the mean voltage over a long
time interval. The segments in which the phase-lock areas intersect horizontal lines correspond to
the Shapiro steps on the voltage-current characteristic.

It has been shown earlier that

- the phase-lock areas  exist only for  integer values of the rotation number (a ``quantization effect'' observed in \cite{buch2} and later also proved 
in  \cite{IRF, LSh2009}); 

- the boundary of each phase-lock area $\{\rho = l\}$ consists of two analytic curves, which are the graphs of two
functions $B=g_{l,\pm}(A)$ (see \cite{buch1}; this fact was later explained by A.V.Klimenko via symmetry, see \cite{RK}); 

-  the latter functions have Bessel asymptotics (observed and proved on physical level in ~\cite{shap}, see also \cite[chaptrer 5]{lich},
 \cite[section 11.1]{bar}, ~\cite{buch2006}; proved mathematically in ~\cite{RK}).
 
 - each phase-lock area is an infinite chain of bounded domains going to infinity in the vertical direction, 
 each two subsequent domains are separated by one point, the separation points lying outside the horizontal $B$-axis are called the 
 {\it adjacency points} (or briefly {\it adjacencies)};
 
 - for every $l\in\zz$ the $l$-th phase-lock area is symmetric to the $-l$-th one with respect to the $A$-axis (symmetry of equation (\ref{jos}); the set 
 of adjacencies of all the phase-lock areas is also symmetric). 

 There is a conjecture saying that for every $l\in\zz$ the adjacencies of the $l$-th phase-lock area lie on the vertical line $B=l\omega$, see Fig.1. This conjecture is an open problem that was supported numerically  in 
 \cite{4}\footnote{The results of paper \cite{4} concern a slightly 
 different family of differential equations equivalent to (\ref{jos}), namely, $\frac{dx}{d\tau}=\nu\sin x + a + s\sin \tau$. 
 It is obtained from (\ref{jos}) by coordinate and 
 parameter change $\tau=\frac{\pi}2-\omega t$, $x=-\phi$, $\nu=\frac1{\omega}$, $a=\frac B{\omega}$, $s=\frac A{\omega}$}.  
  It was rigorously shown  in loc. cit. that for every adjacency $(B,A)$ one has $l=\frac B\omega\in\zz$, $l\equiv\rho(B,A)(mod 2)$ 
  and $|l|\leq|\rho(B,A)|$. 
  
  Theorem \ref{adj}, one of the main results of the section stated and proved in Subsection 3.2 describes the adjacencies lying on 
  a given line $B=l\omega$, $l\in\zz$, $l\geq0$, as solutions of an explicit analytic equation. To prove the conjecture, one has to show 
  that their rotation numbers are equal to $l$.

\begin{figure}[ht]
  \begin{center}
   \epsfig{file=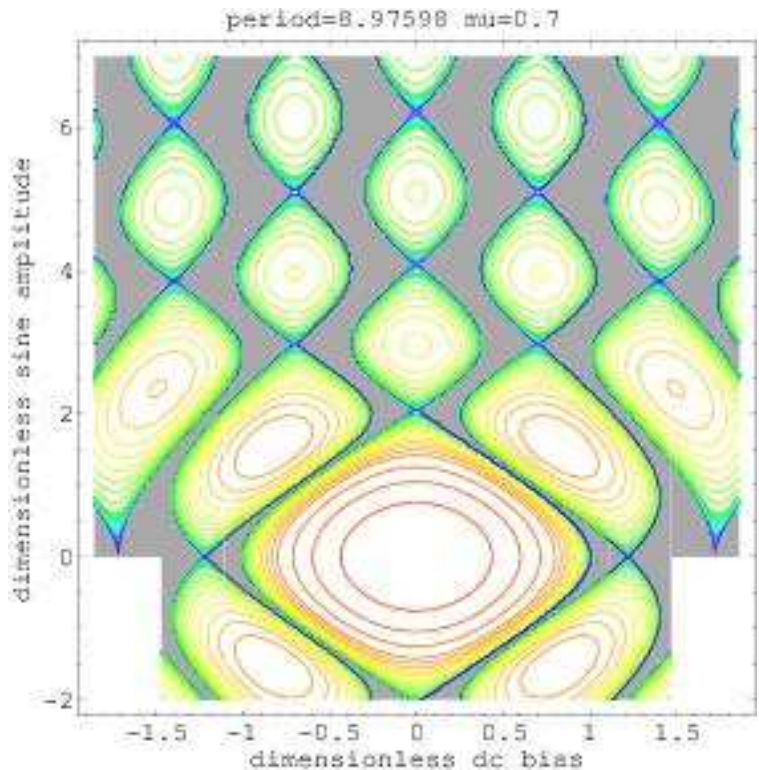}
    \caption{Phase-lock areas and their adjacencies for $\omega = 0.7$. The abscissa is $B$, the ordinate is $A$. 
     Figure taken from \cite[p. 331]{bt1}.}
    %Digram of zone of phase lock
    % }
    \label{fig:1}
  \end{center}
\end{figure} 
% \begin{figure}[htb]
%    \begin{center}
%	\subfigure[$\nu=2.5$]{\includegraphics[scale=0.8]{tongues-gl-2.pdf}}
%	\subfigure[$\nu=1$]{\includegraphics[scale=0.8]{tongues-gl.pdf}}
%	
%	\subfigure[$\nu=0.5$]{\includegraphics[scale=0.8]{tongues-gl-3.pdf}}
%    \caption{Arnold tongues with numbers 0, 1, 2, 3, 4. The adjacencies have integer abscissas equal to the number of the tongue.}
%    \label{fig:1}
%  \end{center}
%  \end{figure}
% Theorem \ref{adj} stated and proved in Subsection 3.2 gives the  description  of the adjacencies as solutions of explicit analytic equations. 
 The proof of  Theorem \ref{adj} is based on the results of V.M.Buchstaber and S.I.Tertychnyi \cite{bt1}, Theorem \ref{th} and the next theorem  relating 
 family (\ref{jos}) to families of double confluent Heun type equations. To state it, let us recall the following constructions and notations from 
 \cite{bt1}. Set
\begin{equation} l=\frac B\omega,  \ \mu=\frac A{2\omega}, \ \lambda=\left(\frac1{2\omega}\right)^2-\mu^2,\label{param}\end{equation}
The adjacencies correspond to $l\in\zz$, and it suffices to describe only those with $l\geq0$, by symmetry. Thus, without loss of generality, everywhere below we consider that $l\in\zz$, $l\geq0$. 

To family of equations (\ref{jos}), V.M.Buchstaber and S.I.Tertychnyi have  associated in \cite{bt1} the  family of
 second order differential equations
\begin{equation} z^2E''+((l+1)z+\mu(1-z^2))E'+(\la-\mu(l+1)z)E=0,\label{heun1}\end{equation}
which is equivalent to the family of double confluent Heun equations
\begin{equation} z^2v''+((l+1)z+\mu(z^2+1))v'+(\la+\mu^2)v=0, \ \ \ v=e^{-\mu z}E,\label{heun}\end{equation}
cf \cite[equations (2), (23)]{bt1}. 

\begin{theorem} \label{holojos} For every $\omega>0$, $l\in\zz$, $l\geq0$  a pair $(B,A)$ with $A\neq0$, $B=l\omega$ 
 is an adjacency for family of equations (\ref{jos}), if and only if 
the corresponding equation (\ref{heun1}) with $\la$, $\mu$ as in (\ref{param}) has a nontrivial holomorphic solution at 0. 
\end{theorem}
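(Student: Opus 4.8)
The plan is to route everything through the reduction of the nonlinear family (\ref{jos}) to the linear family (\ref{heun1}) set up by Buchstaber and Tertychnyi in \cite{bt1}, and to characterize adjacencies by the monodromy of (\ref{heun1}) at its irregular point $z=0$. First I would recall the dictionary of \cite{bt1}: complexifying the flow (\ref{josvect}) and substituting $z=e^{i\tau}$ turns the M\"obius (Riccati) equation for $\phi$ into the scalar equation (\ref{heun1}) (through the gauge $v=e^{-\mu z}E$ of (\ref{heun})), so that the time-$2\pi$ Poincar\'e map of (\ref{josvect}) becomes the projectivization of the monodromy $M\in SL_2(\cc)$ of (\ref{heun1}) along a small loop around $z=0$. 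Under this dictionary the conjugacy class of $M$ reads off the local phase-lock picture: interior points of the $l$-th area give hyperbolic $M$ (two periodic orbits), ordinary boundary points give parabolic $M$ (a semistable orbit), and a pinch of the chain, i.e. an adjacency, occurs exactly when $M$ is projectively trivial, $M=\pm I$, so that the Poincar\'e map is the identity and every solution of (\ref{heun1}) is single-valued near $z=0$.

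I would then prove the equivalence in two steps. For the implication ``adjacency $\Rightarrow$ holomorphic solution'', when $M=\pm I$ the monodromy is projectively trivial, so the Stokes data at the irregular point $z=0$ are trivial and the formal power-series solution $\sum_{m\geq0}c_mz^m$ of (\ref{heun1}) determined by its three-term recurrence converges; the parity constraint $\rho\equiv l\pmod2$ valid on $B=l\omega$ (established in \cite{4}) selects the case $M=+I$, of eigenvalue $+1$, so this convergent series is a genuine nontrivial holomorphic solution at $0$. For the converse ``holomorphic solution $\Rightarrow$ adjacency'', a holomorphic solution at $0$ is a single-valued eigenvector of $M$ of eigenvalue $+1$; since $\det M=1$, both eigenvalues equal $+1$, so $M$ is either $I$ (an adjacency) or a nontrivial unipotent (an ordinary boundary point).

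The crux is therefore to exclude the unipotent case, and here I would invoke the involutive symmetry exchanging the two irregular points $0$ and $\infty$ carried by the resonant equation (\ref{heun1}), $l=B/\omega\in\zz$, in the reduction of \cite{bt1}. This involution sends the holomorphic-at-$0$ solution to a solution holomorphic at $\infty$; since a loop around $\infty$ is inverse to a loop around $0$, the latter is again an eigenvector of $M$ of eigenvalue $+1$, and it is independent from the first because the holomorphic-at-$0$ solution is generically of irregular ($e^{\mu z}$-)type at $\infty$ (in the exceptional case it is itself holomorphic at $\infty$, hence entire, and $M=I$ follows a fortiori). Two independent eigenvectors with a common eigenvalue force $M=I$, hence an adjacency; symmetrically the holomorphic-at-$0$ solution is then single-valued on $\cc^*$ and therefore entire on $\cc$. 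The hypothesis $A\neq0$, i.e. $\mu\neq0$, is used precisely to keep $0$ and $\infty$ genuinely irregular, so that the regular and $e^{\mu/z}$-type solutions at $0$ are distinct and the symmetry argument does not degenerate.

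I expect the main obstacle to be this analysis at the irregular singularity: separating single-valuedness from holomorphy (the $e^{\mu/z}$-type solution is single-valued but not holomorphic), showing that projectively trivial monodromy forces convergence of the formal recurrence, and checking that the resonance symmetry produces a second \emph{independent} single-valued solution rather than a multiple of the first. The positivity of Theorem \ref{th} is not needed for this equivalence; it enters only afterwards, when Theorem \ref{holojos} is turned into the explicit adjacency description of Theorem \ref{adj}, where one must exclude coincidences between the holomorphic solutions found here and the polynomial solutions of the companion equation (\ref{heun2}).
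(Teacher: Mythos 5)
Your overall skeleton is the same as the paper's: pass to the linear system behind the Riccati equation, identify adjacencies with trivial monodromy of (\ref{heun1}) around $z=0$ (citing \cite{4} for that dictionary and for the analytic diagonalizability at the irregular point, which is what makes ``single-valued'' upgrade to ``holomorphic''), and then prove the equivalence with the existence of a solution holomorphic at $0$. The forward direction of your argument is essentially the paper's, modulo phrasing (the paper quotes \cite[lemma 3.3, proposition 2.9]{4} directly, which already give $M=I$ rather than $M=\pm I$ and the convergence of the formal solution).

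The converse, however, has a genuine gap exactly at its crux. To exclude the nontrivial unipotent case you invoke ``the involutive symmetry exchanging the two irregular points $0$ and $\infty$ carried by the resonant equation (\ref{heun1})''. No such involution of (\ref{heun1}) exists for $l\geq1$: the substitution $z\mapsto -z^{-1}$ composed with the gauge factor $e^{\mu(z+z^{-1})}$ intertwines (\ref{heun2}) with (\ref{heun1}), i.e.\ it sends the equation with parameter $l$ to the one with parameter $-l$ (equivalently, at the level of the system (\ref{tty}) the natural symmetry also swaps the two components $u$ and $v$, which satisfy different second-order equations). Hence applying it to the holomorphic-at-$0$ solution of (\ref{heun1}) produces a solution of a \emph{different} equation, not a second eigenvector of the monodromy $M$ of (\ref{heun1}), and your ``two independent eigenvectors force $M=I$'' step collapses. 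This is precisely the nontrivial point the paper does not reprove: it cites \cite[theorem 3]{bt1}, which asserts that if (\ref{heun1}) has a nontrivial solution holomorphic at $0$ then \emph{all} solutions are holomorphic on $\cc^*$ (so the monodromy is trivial), a statement established there by a genuine analysis of the three-term recurrences rather than by a $0\leftrightarrow\infty$ symmetry. You would either need to import that theorem or supply an actual proof of it; the symmetry argument as written does not do so. Your closing remark that Theorem \ref{th} is not used here is correct.
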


\begin{remark} A solution of equation (\ref{heun1})  is holomorphic at 0, if and only if it is an entire function: 
holomorphic on $\cc$. An entire solution is uniquely defined up to multiplicative constant. See  
\cite[lemma 3, statement 4]{bt2}. 
\end{remark}

Theorem \ref{holojos}  was implicitly stated in \cite[p. 332, paragraph 2]{bt1}. We give its proof  in Subsection 3.2 for 
completeness of presentation.

In Subsection 3.1 we describe completely those parameter values for which equation (\ref{heun1}) has a nontrivial entire solution: 
these parameter values are solutions of equation (\ref{xio}).  We then deduce the description of the adjacencies in Subsection 3.2.

\subsection{Entire solutions of double confluent Heun equations}
For every $\la,\mu\in\rr$, $\mu\geq0$ and $l\in\zz$, $l\geq0$ set 
\begin{equation}R_m=\prod_{j=m}^{\infty}M_j, \text{ where } M_j=\left(\begin{matrix} &1+\frac{\la}{j(j-l)} & \frac{\mu^2}{j(j-l)}\\
& 1 & 0\end{matrix}\right), \ m=l+1,l+2,\dots,\label{rm}\end{equation}
where the multipliers with bigger indices are placed to the right from those with smaller indices. The well-definedness of the 
infinite products $R_m$ was proved in \cite[lemma 15]{bt1}. Set
\begin{equation}\xi_l(\la,\mu)=(\la \ \  \mu^2) R_{l+1}\left(\begin{matrix} & 1\\ & 0\end{matrix}\right).\label{xi}\end{equation}

The main result of the present subsection is the following theorem. 

\begin{theorem} \label{equiv} For every $l\geq0$ equation (\ref{heun1}) has a nontrivial entire solution, if and only if the corresponding 
parameters $(\la,\mu)$ satisfy the equation 
\begin{equation}\xi_l(\la,\mu)=0.\label{xio}\end{equation}
\end{theorem}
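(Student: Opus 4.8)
The plan is to reduce the existence of an entire solution to a convergence property of an explicit three-term recurrence and then to read that property off from the matrix products defining $\xi_l$. First I would look for a solution of (\ref{heun1}) as a power series $E(z)=\sum_{m\ge0}c_m z^m$ and substitute it into the equation. Collecting the coefficient of $z^m$ gives the three-term recurrence
\begin{equation*}
\mu(m+1)c_{m+1}+(m(m+l)+\la)c_m-\mu(m+l)c_{m-1}=0,\qquad m\ge0,
\end{equation*}
with the convention $c_{-1}=0$. Since the coefficient $\mu(m+1)$ of $c_{m+1}$ never vanishes (for $\mu\neq0$), the relation at $m=0$ forces $c_1=-\tfrac{\la}{\mu}c_0$ and then determines all $c_m$ from $c_0$. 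Hence the formal power series solution is unique up to a multiplicative constant, and — using that $v=e^{-\mu z}E$ is entire if and only if $E$ is — the existence of a nontrivial entire solution of (\ref{heun1}) is equivalent to the convergence of this unique formal series on all of $\cc$. The degenerate case $\mu=0$ is treated directly.

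Next I would analyze the asymptotics of solutions of the recurrence. After dividing by $\mu(m+1)$ the middle coefficient grows linearly in $m$, so by a Perron--Kreuser type analysis the solution space splits into a \emph{minimal} (recessive) family, with $c_m$ decaying like $\mu^m/m!$ and hence entire generating series, and a \emph{dominant} family, with $c_m$ growing like $m!$ and hence zero radius of convergence. Consequently the unique formal solution defines an entire function precisely when it is the minimal solution, i.e. when it carries no component along the dominant direction.

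It then remains to identify this minimality condition with $\xi_l(\la,\mu)=0$. For $m\ge l+1$ the tail of the recurrence can be encoded by the transfer matrices $M_j$ of (\ref{rm}), and the infinite products $R_m=\prod_{j\ge m}M_j$ — conditionally convergent by \cite[lemma 15]{bt1} — single out the one-dimensional minimal direction of the tail recurrence. Transporting the data produced by the recurrence for $0\le m\le l$ forward to index $l+1$ and pairing it with the covector $(\la\ \ \mu^2)$ through $R_{l+1}\binom{1}{0}$ yields exactly $\xi_l(\la,\mu)$, which is the obstruction measuring the dominant component; thus $\xi_l=0$ holds if and only if the formal solution is minimal, i.e. entire. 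I would prove both implications along these lines, so that (\ref{xio}) becomes the sought criterion.

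The hard part will be this last step: rigorously matching the concrete recurrence to the matrix products that define $\xi_l$, and proving that $\xi_l$ is precisely the coefficient of the dominant solution. In particular one must pass from a formal, a priori only Gevrey, power series to a genuinely convergent one when $\xi_l=0$, which requires controlling the minimal solution at the irregular singular point $z=0$ rather than applying a naive ratio test. The bookkeeping at the transition index $l+1$, where the factors $M_j$ with $j\le l$ would be singular, is the source of the special role played by $l$ and is where I expect the technical difficulty to concentrate.
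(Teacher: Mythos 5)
Your setup is correct as far as it goes: the three-term recurrence you write down is the right one, the Perron--Kreuser dichotomy into a minimal and a dominant family does hold for it, and ``(\ref{heun1}) has a nontrivial entire solution $\iff$ the unique formal power-series solution is the minimal solution of the recurrence'' is a valid reformulation. But the step you defer as ``the hard part'' --- proving that $\xi_l(\la,\mu)=0$ is \emph{exactly} the condition that the solution propagated from $c_0$ is minimal --- is not bookkeeping at the transition index; it is the precise point at which the problem was open. Buchstaber and Tertychnyi carried out exactly the analysis you sketch in \cite{bt1}: the infinite products $R_m$ encode the minimal direction of the tail recurrence, and they proved that $\xi_l=0$ is \emph{sufficient} for an entire solution (Theorem \ref{suffice}) in general, but \emph{necessary} only under the extra hypothesis that the companion equation (\ref{heun2}) (obtained from (\ref{heun1}) by $l\mapsto-l$) has no polynomial solution (Theorem \ref{pol-all}). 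In the remaining degenerate case their recurrence argument does not close, and no direct closure is known. Your plan contains no mechanism for handling that case and, tellingly, never invokes Theorem \ref{th} --- yet the only known way to finish is through it: if (\ref{heun2}) has a polynomial solution $\hat E$ (necessarily of degree $l-1$), then $E(z)=e^{\mu(z+z^{-1})}\hat E(-z^{-1})$ solves (\ref{heun1}) on $\cc^*$, its Laurent coefficients at $z^{-l},\dots,z^{-1}$ are the image of the coefficient vector of $\hat E$ under the Bessel matrix $A_{k,n}$ with $a_j=I_j(2\mu)$, $k=(l,\dots,1)$, $n=(l-1,\dots,0)$, and the positivity of $f_{k,n}(2\mu)=\det A_{k,n}$ (Theorem \ref{th}) forces these coefficients to be nonzero, so no entire solution exists (Theorem \ref{polsol}). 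So there is a genuine gap exactly where you predicted difficulty, and the difficulty is not merely technical: it is the content of the paper.

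For comparison, the paper's own proof of Theorem \ref{equiv} is a two-line assembly: the case $l=0$ and the sufficiency of (\ref{xio}) are quoted from \cite{bt1} (Theorems \ref{lo}, \ref{suffice}); necessity splits according to whether (\ref{heun2}) has a polynomial solution, the negative case being \cite[theorem 8]{bt1} (Theorem \ref{pol-all}) and the positive case being ruled out by Theorem \ref{polsol}, whose proof combines \cite[theorem 2]{bt2} with the determinant positivity of Theorem \ref{th}. If you intend to pursue your route, you would in effect be reproving the cited results of \cite{bt1} from scratch \emph{and} would still need an argument, presumably equivalent in strength to the nonvanishing of $f_{k,n}$, for the case where the head of the recurrence degenerates because (\ref{heun2}) admits a polynomial solution.
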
 

Theorem \ref{equiv} answers positively conjecture 2 from \cite[p. 332]{bt1}. 

\begin{remark} Equation (\ref{heun1}) in an entire function $E(z)$ can be translated as a system of linear three-term recurrent 
relations on its Taylor coefficients, see \cite[p.338, formula (34)]{bt1}. The above matrices $M_j$ and function $\xi_l(\la,\mu)$ introduced in \cite[p.337]{bt1} originate from studying the latter 
recurrence relations. The function $\xi_l(\la,\mu)$ is analytic, and its Taylor series converges on the whole complex 
plane $\cc^2$. This follows from the convergence of the infinite matrix product $R_m$, see \cite[lemma 15]{bt1}. (Formally, the 
proof  of convergence in loc. cit. is given for $\la,\mu\in\rr$, but it remains valid for complex values without changes.)  
\end{remark}

The proof of Theorem \ref{equiv} is based on the idea from \cite{bt1} to consider 
simultaneously the equation

\begin{equation} z^2E''+((-l+1)z+\mu(1-z^2))E'+(\mu(l-1)z+\la)E=0,\label{heun2}\end{equation}
which is obtained from equation (\ref{heun1}) by changing $l$ to $-l$. 

%Theorem \ref{equiv} will be deduced below from the  following four theorems.  

\begin{theorem} \label{lo} \cite[theorem 7]{bt1} The statement of Theorem \ref{equiv} holds for $l=0$.
\end{theorem}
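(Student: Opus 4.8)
The plan is to reduce existence of an entire solution of (\ref{heun1}) with $l=0$ to the convergence of an explicit three-term recurrence on the Taylor coefficients, and then to identify that convergence condition with the equation $\xi_0(\la,\mu)=0$.

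First I would substitute a formal power series $E(z)=\sum_{m\geq0}c_mz^m$ into (\ref{heun1}) with $l=0$, i.e. into $z^2E''+(z+\mu(1-z^2))E'+(\la-\mu z)E=0$, and read off the coefficient of $z^m$. Collecting terms yields the three-term recurrence
$$\mu(m+1)c_{m+1}+(m^2+\la)c_m-\mu m\,c_{m-1}=0,\qquad m\geq0,$$
with the convention $c_{-1}=0$; for $m=0$ it reduces to $\la c_0+\mu c_1=0$. Assuming $\mu\neq0$ (the Euler case $\mu=0$ is elementary and serves below as a check), this recurrence determines every $c_m$ from $c_0$. Hence (\ref{heun1}) possesses a unique formal power series solution up to a scalar, and the whole problem becomes whether this distinguished formal solution has infinite radius of convergence, i.e. is entire.

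Next I would analyze the recurrence asymptotically. Its solution space is two-dimensional and splits into a \emph{dominant} family with $c_m\sim\mathrm{const}\cdot(-1)^m m!\,\mu^{-m}$, giving radius of convergence zero, and a \emph{recessive} family decaying super-exponentially; the formal solution is entire exactly when the initial data $(c_0,c_1)$ lie on the recessive line. To make this explicit in terms of the matrices $M_j$ of (\ref{rm}), I would normalize by $c_m=(-1)^m\mu^m w_m/m!$, turning the recurrence into $\mu^2w_{m+1}=(m^2+\la)w_m+m^2w_{m-1}$. This is the model recurrence $j^2b_{j+1}=(j^2+\la)b_j+\mu^2 b_{j-1}$ — whose transfer matrix is precisely $M_j$ — read in the reverse direction, the coefficients $m^2$ and $\mu^2$ being interchanged. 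The convergence of the infinite products $R_m=\prod_{j\geq m}M_j$ proved in \cite[lemma 15]{bt1} then furnishes a distinguished asymptotic direction as $j\to\infty$, and dually pins down the recessive line of the $w$-recurrence.

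Finally I would express the recessiveness of $(c_0,c_1)$ as the vanishing of the pairing of the initial vector with the covector produced by the convergent product $R_1$, and verify that this pairing is exactly $\xi_0(\la,\mu)=(\la\ \ \mu^2)R_1\binom{1}{0}$ of (\ref{xi}); the identity $\la c_0+\mu c_1=0$ at $m=0$ is what makes the row $(\la\ \ \mu^2)$ and the column $\binom{1}{0}$ appear. This yields the equivalence ``(\ref{heun1}) has a nontrivial entire solution $\Longleftrightarrow\xi_0(\la,\mu)=0$''. As a consistency check, in the degenerate case $\mu=0$ one computes directly $\xi_0(\la,0)=\la\prod_{j\geq1}(1+\la/j^2)$, which vanishes exactly for $\la\in\{0,-1,-4,-9,\dots\}$ — precisely the values for which the Euler equation $z^2E''+zE'+\la E=0$ has the entire monomial solution $E=z^{m}$. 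The main obstacle is the asymptotic separation step: since $M_j\to\left(\begin{smallmatrix}1&0\\1&0\end{smallmatrix}\right)$ is non-invertible and non-hyperbolic, the classical Poincar\'e--Perron/Pincherle dichotomy does not apply off the shelf and must be recovered from the finer convergence of the $R_m$ in \cite[lemma 15]{bt1}; turning the heuristic identification of the recessive line with $\{\xi_0=0\}$ into a rigorous equivalence is the delicate part, after which both implications follow from uniqueness of the formal solution.
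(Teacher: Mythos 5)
There is nothing in this paper to compare your argument against: Theorem \ref{lo} is not proved here at all, it is imported verbatim as \cite[theorem 7]{bt1}, so the only honest benchmark is whether your outline would stand on its own as a proof. Your preparatory computations are correct and do match the setup that \cite{bt1} uses: the three-term recurrence $\mu(m+1)c_{m+1}+(m^2+\la)c_m-\mu m\,c_{m-1}=0$ is the right one for (\ref{heun1}) with $l=0$, the normalization $c_m=(-1)^m\mu^m w_m/m!$ is a sensible way to bring in matrices of the shape $M_j$, and the $\mu=0$ sanity check $\xi_0(\la,0)=\la\prod_{j\ge1}(1+\la/j^2)$ against the Euler equation is right.

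However, the proposal has a genuine gap, and you name it yourself: the entire content of the theorem sits in the ``asymptotic separation step'' that you leave heuristic. You need (i) that the solution space of the $w$-recurrence splits into a dominant family (radius of convergence $0$ for $E$) and a one-dimensional recessive family (entire $E$), with nothing in between, and (ii) that the recessive line is exactly the kernel of the functional $(\la\ \ \mu^2)R_1\binom{1}{0}$. Neither follows from the tools you invoke. Since $M_j\to\left(\begin{smallmatrix}1&0\\1&0\end{smallmatrix}\right)$, which is singular and has eigenvalues $1$ and $0$, Poincar\'e--Perron/Pincherle gives no dichotomy, and the convergence of $R_m=M_mM_{m+1}\cdots$ from \cite[lemma 15]{bt1} produces a limit of \emph{forward} column images, whereas your recessive line is a condition on backward asymptotics of a \emph{different} recurrence (you pass to it by ``interchanging $m^2$ and $\mu^2$ and reversing direction''--- this duality is asserted, not established, and it is not a formal symmetry of the transfer matrices). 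Until the identification of the recessive line with $\{\xi_0=0\}$ is proved, neither implication of the equivalence is obtained; as written, the argument establishes the correct recurrence and a consistency check, but not the theorem.
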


\begin{theorem} \label{suffice} \cite[theorem 2]{bt1} For every $l\in\zz$, $l\geq0$, and every $(\la,\mu)$ satisfying equation (\ref{xio}) the corresponding equation (\ref{heun1}) has a nontrivial entire solution.
\end{theorem}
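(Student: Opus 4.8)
The plan is to translate equation (\ref{heun1}) for an entire solution $E(z)=\sum_{m\geq0}E_mz^m$ into a three-term recurrence on the Taylor coefficients and then to exhibit, when $\xi_l(\la,\mu)=0$, a nontrivial coefficient sequence decaying fast enough for $\sum E_m z^m$ to converge on all of $\cc$. Substituting the series into (\ref{heun1}) and collecting the coefficient of $z^m$ gives, for every $m\geq0$ (with the convention $E_{-1}=0$),
\[\mu(m+1)E_{m+1}+(m(m+l)+\la)E_m-\mu(m+l)E_{m-1}=0.\]
For $\mu>0$ this determines the whole sequence $(E_m)_{m\geq0}$ uniquely up to the scaling constant $E_0$ (the case $\mu=0$ reduces to an Euler equation and is treated directly). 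Thus for every $(\la,\mu)$ there is an essentially unique formal power series solution, and the entire content of the theorem is that this formal solution actually converges precisely when $\xi_l=0$.

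First I would analyze the two-dimensional solution space of the recurrence as $m\to\infty$. A dominant balance between the first two terms yields solutions with $E_{m+1}/E_m\sim -m/\mu$, i.e. factorial growth $|E_m|\sim C(1/\mu)^m\,m!$, giving a divergent series; a recessive balance between the last two terms yields a one-dimensional family with $E_m\sim C\mu^m/m!$, giving an entire series. By Pincherle's theorem the recessive (minimal) solution is unique up to scale and is computed by the convergent tail, which is exactly the infinite matrix product $R_{l+1}=\prod_{j\geq l+1}M_j$ of (\ref{rm}); its convergence is guaranteed by \cite[lemma 15]{bt1}. The normalized formal solution is entire if and only if it coincides with the recessive one.

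It then remains to read off the compatibility between the recessive tail and the boundary condition $E_{-1}=0$. Propagating the recessive solution, encoded by the vector $R_{l+1}\binom{1}{0}$, down through the recurrence to the bottom index and pairing it against the head data produces precisely the scalar $\xi_l(\la,\mu)=(\la\ \ \mu^2)R_{l+1}\binom{1}{0}$ of (\ref{xi}). Hence $\xi_l(\la,\mu)=0$ forces the recessive solution to satisfy $E_{-1}=0$, so that it is a genuine Taylor series; its $\mu^m/m!$ decay makes $E(z)=\sum E_m z^m$ entire and nonzero, proving the theorem.

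I expect the main obstacle to be the two linked quantitative steps hidden in the last two paragraphs: establishing the recessive decay uniformly enough to conclude convergence on all of $\cc$ (rather than on a mere disk), which I would do by a contraction/fixed-point argument for the tail continued fraction associated to the $M_j$; and the exact identification of the bottom-boundary functional with $\xi_l$. The latter requires the substitution turning the raw recurrence into the product of the matrices $M_j$ with the factors $j(j-l)$ --- factors which arise naturally from the companion equation (\ref{heun2}), whose recurrence decouples at index $m=l$ and thereby motivates starting the product at $j=l+1$. Carrying out this bookkeeping, together with a separate check of the resonant cases where $m(m+l)+\la$ vanishes, is where the real work lies.
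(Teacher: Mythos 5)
The paper does not prove this statement at all: it is imported verbatim as Theorem~2 of \cite{bt1} (the citation is part of the theorem header), so your attempt can only be measured against the argument of that reference, which is indeed a coefficient--recurrence argument of the kind you outline. Your recurrence $\mu(m+1)E_{m+1}+(m(m+l)+\la)E_m-\mu(m+l)E_{m-1}=0$ is correct, and the overall plan is sound; but the step you defer as ``bookkeeping'' is the entire content of the theorem, and the hint you give for it points the wrong way. The matrices $M_j$ of (\ref{rm}) do \emph{not} come from the companion equation (\ref{heun2}): they are the transfer matrices of the Taylor recurrence of (\ref{heun1}) itself after the normalization $b_{n+1}=n!\,\mu^{-n}E_n$ and the index shift $j=n+l$, under which $n(n+l)=j(j-l)$ and $n\geq1$ becomes $j\geq l+1$. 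One checks directly that $\left(\begin{smallmatrix} b_n\\ b_{n+1}\end{smallmatrix}\right)=M_{n+l}\left(\begin{smallmatrix} b_{n+1}\\ b_{n+2}\end{smallmatrix}\right)$ for $n\geq1$, while the $m=0$ relation $\la E_0+\mu E_1=0$ (the condition $E_{-1}=0$) becomes exactly $(\la\ \ \mu^2)\left(\begin{smallmatrix} b_1\\ b_2\end{smallmatrix}\right)=0$. With this in hand the implication you actually need requires no Pincherle theory at all: define $\left(\begin{smallmatrix} b_n\\ b_{n+1}\end{smallmatrix}\right):=R_{n+l}\left(\begin{smallmatrix}1\\0\end{smallmatrix}\right)$; consistency of these definitions is automatic because the second row of every $M_j$ is $(1\ \ 0)$, the convergence $R_m\to\left(\begin{smallmatrix}1&0\\1&0\end{smallmatrix}\right)$ gives $b_n\to1$, so $E_n=\mu^n b_{n+1}/n!$ is a nontrivial entire series solving all relations with $n\geq1$, and $\xi_l(\la,\mu)=0$ is precisely the one remaining relation at $n=0$. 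Until you carry out this substitution, your text asserts the identification rather than proving it.

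A second, more serious caution: your sketch claims the full chain ``formal solution entire $\iff$ recessive $\iff$ $\xi_l=0$''. Only the implication from $\xi_l=0$ to entirety is Theorem~\ref{suffice}. The converse is exactly the hard direction of Theorem~\ref{equiv}; it was an open conjecture of \cite{bt1}, and in this paper it is obtained only through Theorems~\ref{pol-all} and~\ref{polsol}, i.e.\ through the positivity of the Bessel determinants (Theorem~\ref{th}). If the converse really followed from the Poincar\'e--Perron/Pincherle dichotomy as routinely as you suggest, the entire determinant machinery of the paper would be superfluous; the degenerate cases you postpone (resonances $m(m+l)+\la=0$, the behaviour of the normalized tail when it tends to the kernel of the limiting projection, and the case when (\ref{heun2}) has a polynomial solution) are precisely where that dichotomy fails to close. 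So state and prove only the one-sided implication here, and do not present the converse as a by-product.
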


\begin{theorem} \label{pol-all} \cite[theorem 8, p. 353]{bt1} 
 Let for a given $l\in\mathbb N$ and some $\la$, $\mu$ the corresponding 
differential equation (\ref{heun2}) have no polynomial 
solutions. Then equation (\ref{xio}) is also a necessary condition for the existence of a non-trivial entire solution of equation (\ref{heun1}). 
\end{theorem}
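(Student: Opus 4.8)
The plan is to argue entirely through the three–term recurrence on Taylor coefficients, which is the very origin of the matrices $M_j$ and of the function $\xi_l$. First I would substitute a formal power series $E(z)=\sum_{m\ge0}c_mz^m$ into (\ref{heun1}) and record the recurrence $\mu(m+1)c_{m+1}+[m(m+l)+\la]c_m-\mu(m+l)c_{m-1}=0$ with $c_{-1}=0$, and do the same for (\ref{heun2}), whose $l\mapsto-l$ substitution gives $\mu(m+1)\tilde c_{m+1}+[m(m-l)+\la]\tilde c_m-\mu(m-l)\tilde c_{m-1}=0$. The next point is the standard recessive/dominant dichotomy for such a second order linear difference equation: its solution space is two–dimensional and splits, using the convergence of the infinite products $R_m$ from \cite[lemma 15]{bt1}, into a one–dimensional line of recessive solutions with $c_m$ of order $\mu^m/m!$ (which produce entire functions) and a complementary family growing like $m!/\mu^m$ with zero radius of convergence. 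Hence (\ref{heun1}) has a nontrivial entire solution exactly when the holomorphic line singled out by $c_{-1}=0$ coincides with the recessive line at $+\infty$ — a single scalar condition. Theorem \ref{suffice} already supplies the \emph{sufficiency} direction, identifying this condition with $\xi_l(\la,\mu)=0$; so the work to be done here is the converse, that an entire solution forces $\xi_l=0$, and the only thing standing between the two is the resonance discussed below.

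The second step is to analyse that resonance. In the recurrence for (\ref{heun2}) the coefficient $-\mu(m-l)$ of $\tilde c_{m-1}$ vanishes at the resonant index $m=l$, where the relation collapses to $\mu(l+1)\tilde c_{l+1}+\la\,\tilde c_l=0$; this is precisely why the product $R_{l+1}=\prod_{j\ge l+1}M_j$ in (\ref{rm}) is started at $l+1$, avoiding the singular matrix $M_l$. A short calculation shows that after the recessive normalization $\tilde c_m=\mu^m g_m/m!$ the coefficient data $j(j-l)$, $j(j-l)+\la$, $\mu^2$ building $M_j$ is exactly the data governing the recessive solution of (\ref{heun2}), but with reversed propagation direction — the discrete shadow of the involution $z\mapsto1/z$ exchanging the two irregular points and interchanging (\ref{heun1}) with (\ref{heun2}). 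I would also classify the polynomial solutions of (\ref{heun2}): forcing $\tilde c_{d+1}=0$ at the top and feeding this into the recurrence at $m=d+1$ gives $\mu(d+1-l)\tilde c_d=0$, so (for $\mu\neq0$, the case $\mu=0$ being elementary) a polynomial solution must have degree exactly $l-1$, and it exists precisely when the coefficients propagated forward from $\tilde c_0$ satisfy $\tilde c_l=0$.

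With these pieces the necessity is best proved in contrapositive form. Assume (\ref{heun1}) has a nontrivial entire solution and suppose $\xi_l\neq0$. Then $(c_m)$ is the recessive solution of the (\ref{heun1})–recurrence; transporting it across the duality above produces a distinguished solution of (\ref{heun2}), and the pairing $\xi_l=(\la\ \ \mu^2)R_{l+1}\binom{1}{0}$ measures exactly how the recessive data above the resonance $m=l$ connects to the holomorphic data below it. The hypothesis $\xi_l\neq0$ forces this transported solution to terminate at the resonance, i.e.\ to satisfy $\tilde c_l=0$ and hence to be a genuine polynomial of degree $l-1$; this contradicts the standing hypothesis that (\ref{heun2}) has no polynomial solution. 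Therefore $\xi_l=0$, which is the assertion of the theorem. I would organise the descent by induction on $l$, with base case $l=0$ given by Theorem \ref{lo} (there (\ref{heun1}) and (\ref{heun2}) coincide and there is no nontrivial resonance), each inductive step using the $2\times2$ linear algebra at $j=l$ to peel off one unit of the parameter.

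The hard part will be making the ``transport across the resonance'' fully rigorous: one must identify the connection coefficient between the holomorphic line and the recessive line with the explicit quantity $\xi_l$ in (\ref{xi}), and prove that its non‑vanishing is \emph{equivalent} to the termination that yields an honest polynomial solution of (\ref{heun2}), with no spurious cancellation at the singular index $j=l$. The analytic inputs — uniqueness of the recessive solution and convergence of the products $R_m$ — are routine via \cite[lemma 15]{bt1}; the delicate point is precisely the rank behaviour of the singular matrix $M_l$ and the careful bookkeeping of the factor $m-l$ as one crosses $m=l$, which is exactly the degeneracy that the hypothesis of Theorem \ref{pol-all} is designed to exclude. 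Once this crossing lemma is in place, combining it with Theorems \ref{lo} and \ref{suffice} closes the equivalence of Theorem \ref{equiv} on the complement of the (codimension‑one) locus where (\ref{heun2}) admits a polynomial solution.
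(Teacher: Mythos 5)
First, a point of comparison: the paper does not prove Theorem \ref{pol-all} at all --- it is quoted verbatim from \cite[theorem 8, p. 353]{bt1} and used as a black box, so there is no in-paper argument to measure yours against. Judged on its own, your proposal is a plan rather than a proof, and the gap sits exactly where the theorem's content is. Your setup is sound: the three-term recurrences for (\ref{heun1}) and (\ref{heun2}) are computed correctly, the recessive/dominant dichotomy does show that an entire solution of (\ref{heun1}) must have recessive Taylor coefficients, and the observation that a holomorphic-at-zero solution of (\ref{heun2}) is a polynomial precisely when the forward-propagated coefficient $\tilde c_l$ vanishes is right (for $\mu\neq0$). But the decisive step --- that $\xi_l\neq0$ together with the existence of an entire solution of (\ref{heun1}) \emph{forces} the ``transported'' solution of (\ref{heun2}) to terminate at the resonance --- is asserted, not proved. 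You give no formula for the transport, no identification of $\xi_l$ in (\ref{xi}) with a connection coefficient, and you yourself defer this ``crossing lemma'' as the hard part. Everything before it is standard, everything after it is bookkeeping; the theorem \emph{is} the crossing lemma.

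I am also skeptical of the mechanism you propose for that step. The genuine bridge between (\ref{heun1}) and (\ref{heun2}) in this circle of papers is the transformation (\ref{romb}), $E(z)=e^{\mu(z+z^{-1})}\hat E(-z^{-1})$; on coefficients this is convolution with the sequence $I_j(2\mu)$, i.e.\ multiplication by a Bessel matrix $A_{k,n}$, not the ``reversal of propagation direction'' or ``discrete shadow of $z\mapsto 1/z$'' that you say follows from a short calculation. If the two recurrences were exchanged by a simple reindexing, Theorem \ref{polsol} would not require the nonvanishing of the Bessel determinant $\Delta(2\mu)$, i.e.\ the whole of Theorem \ref{th}. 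Note also that the paper states that the matrices $M_j$ of (\ref{rm}) arise from the recurrence of (\ref{heun1}) (with the shift $j=m+l$, so $j(j-l)=m(m+l)$ and the excluded index $j=l$ encodes the constraint equation at $m=0$), whereas you locate the resonance in the recurrence of (\ref{heun2}) at $m=l$; your narrative conflates these two degeneracies. Finally, the closing ``induction on $l$, peeling off one unit at $j=l$'' is unspecified: the inductive hypothesis concerns a different differential equation, and no mechanism is given for passing from level $l$ to level $l-1$. To make this a proof you must actually establish the connection-coefficient identity relating $\xi_l$, the recessive line, and the polynomial solutions of (\ref{heun2}) --- which in effect means reconstructing the argument of \cite[theorem 8]{bt1} --- or else cite that theorem, as the paper does.
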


\begin{theorem} \label{polsol} If for a given $l\geq1$ 
equation (\ref{heun2}) has a polynomial solution, then the corresponding equation (\ref{heun1}) has 
no nontrivial entire solution.
\end{theorem}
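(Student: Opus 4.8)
The plan is to argue by contradiction and to collapse the whole statement onto the nonvanishing of a single modified Bessel determinant, which is then supplied by Theorem \ref{th}. So suppose $l\geq1$, that equation (\ref{heun2}) admits a nontrivial polynomial solution $P(z)$, and --- contrary to what we want --- that equation (\ref{heun1}) admits a nontrivial entire solution $E(z)=\sum_{m\geq0}E_mz^m$. Since (\ref{heun2}) is obtained from (\ref{heun1}) by the substitution $l\mapsto-l$, the two equations form the dual pair exploited by Buchstaber and Tertychnyi, and I would use precisely their coupling of the two. Each equation has irregular singular points only at $0$ and $\infty$ and translates into a three-term recurrence on Taylor coefficients; for (\ref{heun1}) this recurrence reads
$$\mu(m+1)E_{m+1}+(m(m+l)+\la)E_m-\mu(m+l)E_{m-1}=0,$$
while the matrices $M_j$ in (\ref{rm}), whose denominators $j(j-l)$ are exactly the indicial factors of the companion equation (\ref{heun2}), encode the dual recurrence and the quantity $\xi_l(\la,\mu)$ in (\ref{xi}).

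First I would couple the polynomial solution $P$ of (\ref{heun2}) with the putative entire solution $E$ of (\ref{heun1}) through a bilinear (Wronskian-type) pairing of the two dual operators. The existence of the polynomial solution truncates the dual recurrence, which collapses the tail data of $E$ --- governed by the infinite product $R_{l+1}$ --- onto a finite square linear system in finitely many of the coefficients $E_m$. Holomorphy of $E$ at $0$ forces this overdetermined system to possess a nonzero solution, so its determinant must vanish. The crux, carried out in \cite{bt1, bt2}, is to identify this compatibility determinant, after normalization, with the Toeplitz determinant of modified Bessel functions
$$\det\big(I_{i-j+1}(x_0)\big)_{i,j=1}^{l}=f_{\delta+(1,\dots,1),\,\delta}(x_0)$$
at an appropriate value $x_0>0$ depending on $\mu$; here $\delta+(1,\dots,1)=(l,l-1,\dots,1)$ and $\delta=(l-1,\dots,0)$. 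The modified Bessel functions appear precisely through the connection between the expansions of solutions at the two irregular points, governed by the generating identity $e^{\frac{x}{2}(z+1/z)}=\sum_jI_j(x)z^j$.

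Finally I would apply Theorem \ref{th} with $k=(l,l-1,\dots,1)$ and $n=(l-1,\dots,0)$: it yields $f_{\delta+(1,\dots,1),\,\delta}(x_0)>0$, in particular nonzero, for every $x_0>0$. This contradicts the vanishing of the compatibility determinant, so no entire solution $E$ can exist, which proves the theorem. This is exactly the ``third conjecture'' reduction of \cite{bt1}, under which Theorem \ref{polsol} was already established there; the only missing ingredient was the nonvanishing of the determinant, now furnished by Theorem \ref{th}. The main obstacle is therefore the identification of the compatibility determinant with $f_{\delta+(1,\dots,1),\,\delta}(x_0)$ --- that is, recognizing the modified Bessel functions inside the connection data of the dual Heun pair --- which is the technical heart imported from \cite{bt1, bt2}; once it is in place, Theorem \ref{th} closes the argument at once.
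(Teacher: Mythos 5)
Your proposal is correct and follows essentially the same route as the paper: both arguments reduce the theorem to Theorem 2 of \cite{bt2} (the ``third conjecture'' reduction, conditional on the nonvanishing of $\Delta(x)=f_{k,n}(x)$ with $k=(l,\dots,1)$, $n=(l-1,\dots,0)$) and then close the gap by invoking Theorem \ref{th}. The only divergence is in your sketch of how the Bessel determinant arises inside \cite{bt2} --- the paper's accompanying remark obtains it not from a Wronskian-type pairing but from the explicit transformation $g(z)\mapsto e^{\mu(z+z^{-1})}g(-z^{-1})$, which turns the polynomial solution $\hat E$ of (\ref{heun2}) into a solution of (\ref{heun1}) whose Laurent coefficients at $z^{-l},\dots,z^{-1}$ are $A_{k,n}$ (with $a_j=I_j(2\mu)$) applied to the coefficient vector of $\hat E$ and must vanish --- but since both you and the paper import that identification from \cite{bt2}, this does not affect the argument.
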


Theorem \ref{polsol} solves positively conjecture 1 in  \cite[p. 332]{bt1}. 

\begin{proof} Theorem 2 from \cite{bt2} says that the statement of Theorem \ref{polsol} holds under the additional condition that 
 the determinant $\Delta(x)=f_{k,n}(x)$, $k=(l,\dots,1)$, $n=(l-1,\dots,0)$ from (\ref{det}) with $a_j=I_j(x)$ 
is non-zero for every $x>0$. But the latter inequality follows immediately from Theorem \ref{th}. This proves Theorem \ref{polsol}.
\end{proof}

\begin{remark} The proof of theorem 2 from \cite{bt2}, which relates the existence  of polynomial solution of equation (\ref{heun2}) 
to the above determinant $\Delta(x)$, is done as follows. 
If equation (\ref{heun1}) has an entire solution, then each its solution is a holomorphic function on $\cc^*=\cc\setminus\{0\}$ 
whose Laurent series contains no monomial $z^{s}$, $-l\leq s\leq-1$, see \cite[lemma 3, part (6)]{bt2}. Suppose, by contradiction, that 
the  corresponding equation (\ref{heun2}) has a polynomial solution $\hat E$. Then $deg\hat E=l-1$, by \cite[remark 3, p.973]{bt0}. 
The transformation 
$$g(z)\mapsto e^{\mu(z+z^{-1})}g(-z^{-1})$$
sends solutions of equation (\ref{heun2}) to solutions of equation (\ref{heun1}) \cite[lemma 3, part (5)]{bt2}. Therefore, the function 
\begin{equation}E(z)=e^{\mu(z+z^{-1})}\hat E(-z^{-1})\label{romb}\end{equation}
is a solution of equation (\ref{heun1}). The vector of its Laurent coefficients  at powers $z^{s}$, $s=-l,\dots,-1$ 
should be equal to zero. On the other hand, it is obtained 
from the vector of coefficients of the polynomial $\hat E$  (written in appropriate order with appropriate signs) by multiplication by 
the matrix $A_{k,n}$  from (\ref{det}) of modified Bessel functions $a_j=I_j(2\mu)$ with the above $k$ and $n$:  
$\Delta(2\mu)=det A_{k,n}$. This follows immediately from formula (\ref{romb}). Therefore, if $\Delta(2\mu)\neq0$, then 
the above Laurent coefficient vector is nonzero, -- a contradiction.
\end{remark}

\begin{proof} {\bf of Theorem \ref{equiv}.} Theorem \ref{equiv} follows from Theorems  \ref{lo}--\ref{polsol}.
\end{proof}

\subsection{Josephson effect: adjacencies of phase-lock areas}

\begin{theorem} \label{adj}  For every given $\omega>0$ and $l\in\zz$, $l\geq0$, set $B=l\omega$, a pair  $(B,A)\in\rr^2$ with $A\neq0$ 
is an adjacency of the corresponding family of equations (\ref{jos}), if and only if the corresponding parameters $\la$, $\mu$ given by 
(\ref{param}) satisfy equation (\ref{xio})
\end{theorem}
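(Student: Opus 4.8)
The plan is to assemble Theorem \ref{adj} directly from two ingredients already established in the excerpt: Theorem \ref{holojos}, which characterizes adjacencies lying on the line $B=l\omega$ in terms of existence of a nontrivial holomorphic (equivalently entire) solution of the Heun equation (\ref{heun1}), and Theorem \ref{equiv}, which characterizes exactly those parameter pairs $(\la,\mu)$ for which (\ref{heun1}) admits such an entire solution, namely the zero set of $\xi_l(\la,\mu)$. The proof is therefore essentially a concatenation of two ``if and only if'' statements through a common middle term, after checking that the parameter dictionary (\ref{param}) is respected on both sides.

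First I would fix $\omega>0$ and $l\in\zz$, $l\geq0$, set $B=l\omega$, and take a pair $(B,A)$ with $A\neq0$. I would introduce the parameters $\mu=\frac{A}{2\omega}$ and $\la=\left(\frac1{2\omega}\right)^2-\mu^2$ as prescribed by (\ref{param}); note that $A\neq0$ gives $\mu\neq0$, and that these are exactly the $\la,\mu$ feeding both Theorem \ref{holojos} and Theorem \ref{equiv}, so no discrepancy in the parameter conventions can arise. By Theorem \ref{holojos}, the pair $(B,A)$ is an adjacency of family (\ref{jos}) if and only if the corresponding equation (\ref{heun1}) has a nontrivial holomorphic solution at $0$. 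By the remark following Theorem \ref{holojos}, holomorphy at $0$ is equivalent to being entire, so this condition coincides with the hypothesis of Theorem \ref{equiv}.

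Then I would invoke Theorem \ref{equiv}: for the same $l\geq0$ and the same $(\la,\mu)$, equation (\ref{heun1}) has a nontrivial entire solution if and only if $\xi_l(\la,\mu)=0$, i.e.\ equation (\ref{xio}) holds. Chaining the two equivalences gives that $(B,A)$ is an adjacency if and only if $\xi_l(\la,\mu)=0$, which is precisely the assertion of Theorem \ref{adj}. The only point requiring a word of care is that Theorem \ref{equiv} is stated for all real $\la$ and all $\mu\geq0$, whereas $\mu=\frac{A}{2\omega}$ could be negative when $A<0$; I would handle this either by appealing to the symmetry of equation (\ref{jos}) and of the set of adjacencies under reflection in the $A$-axis, reducing to $A>0$ and hence $\mu>0$, or by noting that $\xi_l$ is analytic on all of $\cc^2$ (as recorded in the remark after Theorem \ref{equiv}) so that the characterization extends to $\mu<0$ without change.

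I do not expect a genuine obstacle here, since all the analytic and number-theoretic content has been pushed into the earlier Theorems \ref{holojos} and \ref{equiv}; the residual work is bookkeeping. If anything is delicate, it is confirming that the ``holomorphic at $0$'' notion of Theorem \ref{holojos} and the ``entire'' notion of Theorem \ref{equiv} are literally the same condition (they are, by the cited \cite[lemma 3, statement 4]{bt2}) and that the sign of $\mu$ is immaterial; once these are dispatched, Theorem \ref{adj} follows immediately by transitivity of logical equivalence.
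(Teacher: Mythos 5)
Your proposal is correct and matches the paper's own argument, which simply states that Theorem \ref{adj} follows from Theorems \ref{holojos} and \ref{equiv}; your extra care about the equivalence of ``holomorphic at $0$'' with ``entire'' and about the sign of $\mu$ only makes explicit what the paper leaves implicit. No further comment is needed.
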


Theorem \ref{adj} follows from Theorems \ref{holojos} and \ref{equiv}. 
%Let us give a proof of Theorem \ref{holojos}.

\begin{proof} {\bf of Theorem \ref{holojos}.} Set  
$$\Phi=e^{i\phi}, \ z=e^{i\tau}=e^{i\omega t}.$$
The complexified equation (\ref{jos}) is equivalent to the Riccati equation 
$$\frac{d\Phi}{dz}=z^{-2}((lz+\mu(z^2+1))\Phi-\frac z{2i\omega}(\Phi^2-1)).$$
The latter is the projectivization of the following linear equation in vector function $(u,v)$ with $\Phi=\frac v{u}$:
\begin{equation}\begin{cases} & v'=\frac1{2i\omega z}u\\
& u'=z^{-2}(-(lz+\mu(1+z^2))u+\frac z{2i\omega}v)\end{cases}\label{tty}\end{equation}
This reduction to a system of linear equations was earlier obtained in slightly different terms in \cite{bkt1, Foote, bt1, IRF}. 
It is easy to check that a function $v(z)$ is the component of a solution of system (\ref{tty}), if and only if it satisfies double confluent 
Heun equation (\ref{heun}), or equivalently, the function $E(z)=e^{\mu z}v(z)$ satisfies equation (\ref{heun1}). System (\ref{tty})  
has singularities only at zero and at infinity; both are irregular ones.

Let us suppose that given $l\geq0$ and $(\la,\mu)$ correspond to an adjacency. Then the corresponding linear system (\ref{tty}) (and hence, 
equation (\ref{heun1})) has trivial monodromy operator along a positive circuit around the origin. This follows from the proof of \cite[lemma 3.3]{4}: 
it was shown in loc. cit. that the monodromy matrix should be equal to $\diag(1,e^{2\pi ia})$ with $a\in\zz$, and hence, to the identity. This implies that 
each solution of equation (\ref{heun1}) is holomorphic on $\cc^*=\cc\setminus\{ 0\}.$ Exactly one non-trivial solution (up to multiplicative 
constant) should be holomorphic at zero. This follows from the fact that the germ at 0 of system (\ref{tty}) should be analytically equivalent to its 
diagonal formal normal form \cite[lemma 3.3]{4} and from \cite[proposition 2.9]{4}. The first part of Theorem \ref{holojos} is proved. 

Now let us prove the converse. Let equation (\ref{heun1}) have a nontrivial solution holomorphic at 0. Then all its solutions are holomorphic in 
$\cc^*$, by \cite[theorem 3]{bt1}. This implies that equation (\ref{heun1}) (and hence, system (\ref{tty})) has trivial monodromy. This together 
with \cite[proposition 3.2]{4} implies that the parameters under consideration correspond to an adjacency. Theorem \ref{holojos} is proved. 
The proof of Theorem \ref{adj} is complete.
\end{proof}

\section{Acknowledgement}

We are grateful to the referees for very stimulating reports and  useful remarks.  We are grateful to 
P.Grinevich for helpful discussions and informing us about related results on total positivity 
\cite{abgryn, fomzel, post, talaska}.

\end{document}